\numberwithin{equation}{section}
\numberwithin{equation}{section}
\newtheorem{defi}{Definition}[section]
\newtheorem{theorem}[defi]{Theorem}
\newtheorem{lemma}[defi]{Lemma}
\newtheorem{corollary}[defi]{Corollary}
\newtheorem{proposition}[defi]{Proposition}
\newtheorem{remark}[defi]{Remark}
\newtheorem{remarks}[defi]{Remarks}
\newtheorem{example}[defi]{Example}
\newcommand{\cD}{{\mathcal D}}
\newcommand{\cE}{{\mathcal E}}
\newcommand{\cP}{{\mathcal P}}
\newcommand{\cT}{{\mathcal T}}
\newcommand{\R}{{\mathbb R}}
\renewcommand{\epsilon}{\varepsilon}
\newcommand{\bb}{{\bb}}
\newcommand{\nn}{\nonumber}
\begin{document}

\title[On the Navier-Stokes equations on surfaces]{On the Navier-Stokes equations on surfaces}

\author{Jan Pr\"uss$\dag$ }

\author{Gieri Simonett}
\address{Department of Mathematics\\
        Vanderbilt University\\
        Nashville, Tennessee\\
        USA}
\email{gieri.simonett@vanderbilt.edu}

\author{Mathias Wilke}
\address{Martin-Luther-Universit\"at Halle-Witten\-berg\\
         Institut f\"ur Mathematik \\
         Halle (Saale), Germany}
\email{mathias.wilke@mathematik.uni-halle.de}

\thanks{This work was supported by a grant from the Simons Foundation (\#426729, Gieri Simonett).}

\subjclass[2010]{35Q35, 35Q30, 35B40}

 \keywords{Surface Navier-Stokes equations, Boussinesq-Scriven surface stress tensor, Killing vector fields, stability of equilibria.}

\dedicatory{Dedicated to Matthias Hieber on the occasion of his $60^{th}$ birthday}

\begin{abstract}
We consider the motion of an incompressible viscous 
fluid that completely covers a smooth, compact and embedded hypersurface $\Sigma$  without boundary  and flows along $\Sigma$. Local-in-time well-posedness is established in the framework of 
 $L_p$-$L_q$-maximal regularity. We characterize the set of equilibria as the set of all Killing vector fields on $\Sigma$ and we show that each equilibrium on $\Sigma$ is stable.  Moreover, it is shown that any solution starting close to an equilibrium exists globally and converges at an exponential rate to a (possibly different) equilibrium as time tends to infinity.
\end{abstract}

\maketitle

\section{Introduction}
Suppose $\Sigma$ is a smooth, compact, {connected}, embedded (oriented) hypersurface in $\R^{d+1}$ without boundary.
Then we consider the motion of an incompressible viscous fluid that completely covers $\Sigma$
and flows along $\Sigma$.

Fluid equations on manifolds appear in the literature as mathematical models for various physical and biological processes,
for instance in the modeling of emulsions and biological membranes.
The reader may also think of an aquaplanet whose surface is completely covered by a fluid.
The case of a planet with oceans and landmass will be considered in future work.

Fluid equations on manifolds have also been studied as mathematical problems in their own right,
see for instance
\cite{ArCr12, CCD17, EbMa70, JaOlRe17, KLG17, Maz03, Tay92, Tem88} and the references cited therein.

In this paper, we model the fluid by the `surface Navier-Stokes equations' on $\Sigma$,
using as constitutive law the {\em Boussinesq-Scriven surface stress tensor}
\begin{equation}\label{T-Sigma-0}
\cT_\Sigma =\cT_\Sigma(u,\pi)=2\mu_s \cD_\Sigma(u) + (\lambda_s-\mu_s)({\rm div}_\Sigma u) \cP_\Sigma -\pi \cP_\Sigma,
\end{equation}
where $\mu_s$ is the surface shear viscosity,
$\lambda_s$ the surface dilatational viscosity, $u$ the velocity field, $\pi$  the pressure, and
\begin{equation}\label{D-Sigma}
\cD_\Sigma(u):=\frac{1}{2}\cP_\Sigma\left( \nabla_\Sigma u + [\nabla_\Sigma u]^{\sf T}\right)\cP_\Sigma
\end{equation}
the surface rate-of-strain tensor.
Here,
$\cP_\Sigma$ denotes the orthogonal projection onto the tangent bundle ${\sf T}\Sigma$ of $\Sigma$,
${\rm div}_\Sigma$  the surface divergence, and $\nabla_\Sigma$ the surface gradient.
We refer to Pr\"uss and Simonett~\cite{PrSi16} and the Appendix for more background information on these objects.

Boussinesq~\cite{Bou13} first suggested to consider
surface viscosity to account for intrinsic frictional forces within an interface.
Several decades later, Scriven~\cite{Scr60} generalized Boussinesq's approach
to material surfaces having arbitrary curvature.
The resulting tensor is nowadays called the Boussinesq-Scriven stress tensor.

For an incompressible fluid, i.e., ${\rm div}_\Sigma u=0$,  the Boussinesq-Scriven surface tensor simplifies to
\begin{equation}\label{T-Sigma}
\cT_\Sigma =2\mu_s \cD_\Sigma(u) -\pi \cP_\Sigma.
\end{equation}

We then consider the following {\em surface Navier-Stokes equations} for an incompressible viscous fluid
    \begin{equation}
    \label{NS-surface}
    \begin{aligned}
    \varrho \big(\partial_t u + \cP_\Sigma (u\cdot \nabla_\Sigma u)\big) - \cP_\Sigma\, {\rm div}_\Sigma\, \cT_\Sigma &=0 &&\text{on}\;\;\Sigma \\
    {\rm div}_\Sigma u &=0 &&\text{on} \;\; \Sigma \\
                           u(0) &= u_0  &&\text{on}\;\; \Sigma,
    \end{aligned}
    \end{equation}
where $\varrho$ is a positive constant. In the sequel, we will always assume that $u_0\in {\sf T}\Sigma$,
i.e., $u_0$ is a tangential field.
\begin{remark}\label{rem:tangential}
{\rm
Suppose $u_0\in {\sf T}\Sigma$.
If  $(u(t),\pi(t))$ is a (sufficiently) smooth solution to \eqref{NS-surface} on some time interval $[0,T)$
then we also have $u(t)\in {\sf T}\Sigma$  for  all $t\in [0,T]$.
This can readily be seen by taking the inner product of the first equation in~\eqref{NS-surface}
with $\nu_\Sigma(p)$, yielding $(\partial_t u(t, p) | \nu_\Sigma(p))=0$ for $(t,p)\in [0,T)\times\Sigma$,
where $\nu_\Sigma$ is the unit normal field of $\Sigma$.
Hence, $(u(t,p)|\nu_\Sigma(p))=(u_0(p) | \nu_\Sigma(p))=0$ for $(t,p)\in [0,T)\times \Sigma.$
}\end{remark}
\noindent
It will be shown in the Appendix that \eqref{NS-surface} can we written in the form
\begin{equation}
\label{NS-surface-2}
\begin{aligned}
\varrho \big(\partial_t u + \cP_\Sigma (u\cdot \nabla_\Sigma u)\big)  - \mu_s \Delta_\Sigma u
- \mu_s(\kappa_\Sigma L_\Sigma - L^2_\Sigma)u + \nabla_\Sigma \pi&=0 &&\text{on}\;\;\Sigma \\
{\rm div}_\Sigma u &=0 &&\text{on} \;\; \Sigma \\
                       u(0) &= u_0  &&\text{on}\;\; \Sigma,
\end{aligned}
\end{equation}
where $\Delta_\Sigma$ is the (negative) Bochner-Laplacian, $\kappa_\Sigma$ the $d$-fold 
mean curvature of $\Sigma$
(the sum of the principal curvatures), and $L_\Sigma$ the Weingarten map.
Here we use the convention that a sphere has negative mean curvature.

We would like to emphasize that the tensor
 $(\kappa_\Sigma L_\Sigma - L^2_\Sigma)$ is  an intrinsic quantity.
In fact, we shall show in Proposition~\ref{pro:D-Sigma}  that
\begin{equation}
\label{Ricci-Gauss}
\begin{aligned}
& \kappa_\Sigma L_\Sigma - L^2_\Sigma = {\rm Ric}_\Sigma  \\
& \kappa_\Sigma L_\Sigma - L^2_\Sigma  = K_\Sigma\quad \text{in case $d=2$,}
\end{aligned}
\end{equation}
where  ${\rm Ric}_\Sigma$ is the Ricci tensor and
 $K_\Sigma$  the Gaussian curvature of $\Sigma$ (the product of the principal curvatures). 
 
 \medskip

 The formulation \eqref {NS-surface} coincides with \cite[formula (3.2)]{JaOlRe17}.
In that paper, the equations for the motion of a viscous incompressible fluid
on a surface were derived from fundamental continuum mechanical principles.
The same equations were also derived in~\cite[formula (4.4)]{KLG17},
based on global energy principles.
We mention that the authors of  \cite{JaOlRe17, KLG17} also consider material surfaces that
may evolve in time.

Here we would like to point out that several formulations for the
`surface Navier-Stokes equations' have been used in the literature,
see~\cite{CCD17} for a comprehensive discussion, and also~\cite[Section 3.2]{JaOlRe17}.
It turns out that the model based on the {Boussinesq-Scriven surface stress tensor}
leads to the same equations as, for instance, 
in \cite[Note added to Proof]{EbMa70} and~\cite{Tay92}.
Indeed, this follows from \eqref{NS-surface-2}-\eqref{Ricci-Gauss} and the relation
\begin{equation}\label{Hodge}
\Delta_\Sigma u= \Delta_H u+ {\rm Ric_\Sigma}u,
\end{equation}
where $\Delta_H$ denotes the Hodge Laplacian (acting on $1$-forms).
\goodbreak

\medskip
The plan of this paper is as follows.
In Section 2 we show that kinetic energy is dissipated by the fluid system~\eqref{NS-surface},
and we characterize all the equilibrium solutions of~\eqref{NS-surface}.
It is shown that at equilibrium, the gradient of the  pressure is completely determined by the velocity field.
Moreover, it is shown that the equilibrium (that is, the stationary) velocity fields 
correspond exactly to the Killing fields of $\Sigma$.
We finish Section 2 with some observations concerning the motion of fluid particles 
in the case of a stationary velocity field.

In Section 3, we prove that the linearization of \eqref{NS-surface-2} enjoys the property
of $L_p$-$L_q$-maximal regularity. We rely on results contained in~\cite[Sections 6 and 7]{PrSi16}.
Moreover, we introduce the Helmholtz projection on $\Sigma$ and we prove
interpolation results for divergence-free vector fields on $\Sigma$.
We then establish local well-posedness of \eqref{NS-surface-2} in 
the (weighted) class of $L_p$-$L_q$-maximal regularity, see Theorem~\ref{Theorem-LWP}.

In Section 4, we prove that all equilibria of \eqref{NS-surface-2} are stable.
Moreover, we show that any solution starting close to an equilibrium exists globally and converges at an exponential rate to a (possibly different) equilibrium as time tends to infinity.
In order to prove this result we show that each equilibrium is normally stable. 
Let us recall that the set of equilibria $\cE$ coincides with the vector space of all
Killing fields on $\Sigma$.
It then becomes an interesting question to know how many Killing fields a given manifold can support.
In Subsection~\ref{Killing}, we include some remarks about the dimension of $\cE$ and we discuss some examples.

In forthcoming work, we plan to use  the techniques introduced in this manuscript
to study the Navier-Stokes equations on manifolds with boundary.

\medskip
We would like to briefly compare the results of this paper with previous results by other authors.
Existence of solutions for the Navier-Stokes equations~\eqref{NS-surface-2}
has already been established in \cite{Tay92}, see also~\cite{Maz03} 
and the comprehensive list of references in~\cite{CCD17}.
The authors in~\cite{Tay92, Maz03} employ techniques of pseudo-differential operators and they make
use of the property that the Hodge Laplacian commutes with the Helmholtz projection.
Under the assumption that the spectrum of the linearization is contained in the negative real axis,
stability of the zero solution is shown in \cite{Tay92}.
The author remarks that this assumption implies that the isometry group of $\Sigma$ is discrete.
In contrast, our stability result in Theorem~\ref{thm:stability} applies to any manifold.

The Boussinesq-Scriven surface stress tensor has also been employed
in the situation of two incompressible fluids which are separated by a free surface, where
surface viscosity (accounting for internal friction within the interface) is included in the model,
see \cite{BoPr10}.

Finally, we mention~\cite{JaOlRe17, OQRY18, ReZh13,ReVo18} and the references 
contained therein for interesting numerical investigations.
These authors also observed that the equilibria velocities correspond to Killing fields.
\goodbreak

\section{Energy dissipation and equilibria}\label{sec:-energy}
\noindent
In the following, we set $\varrho=1$.
Let
\begin{equation}\label{energy-def}
{\sf E}(t):=\int_\Sigma \frac{1}{2} |u(t)|^2\,d\Sigma
\end{equation}
be the (kinetic) energy of the fluid system.
We show that the energy is dissipated by the fluid system~\eqref{NS-surface}.
\begin{proposition}\label{pro:energy-dissipation}
Suppose $(u,\pi)$ is a sufficiently smooth solution of \eqref{NS-surface} with initial value
$u_0\in {\sf T}\Sigma$, defined on some interval  $(0,T)$.  Then
\begin{equation}\label{energy-dissipation}
\frac{d}{dt}{\sf E}(t) = -2\mu_s \int_\Sigma |\cD_\Sigma(u(t))|^2\,d\Sigma,
\quad t\in (0,T).
\end{equation}
\end{proposition}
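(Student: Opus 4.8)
The plan is to compute $\frac{d}{dt}{\sf E}(t)$ by differentiating under the integral sign and using the first equation in \eqref{NS-surface} to replace $\partial_t u$. First I would write
\[
\frac{d}{dt}{\sf E}(t)=\int_\Sigma (\partial_t u\,|\,u)\,d\Sigma
=\int_\Sigma \big( \cP_\Sigma\,{\rm div}_\Sigma\,\cT_\Sigma \,\big|\, u\big)\,d\Sigma
-\int_\Sigma \big(\cP_\Sigma(u\cdot\nabla_\Sigma u)\,\big|\,u\big)\,d\Sigma,
\]
using $\varrho=1$ and the first line of \eqref{NS-surface}. Since $u\in{\sf T}\Sigma$ (Remark~\ref{rem:tangential}), the projections $\cP_\Sigma$ acting on $u$ are harmless, and $(\cP_\Sigma v\,|\,u)=(v\,|\,u)$ for any $v$.

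The convective term vanishes: I would show $\int_\Sigma (u\cdot\nabla_\Sigma u\,|\,u)\,d\Sigma=\tfrac12\int_\Sigma u\cdot\nabla_\Sigma |u|^2\,d\Sigma=\tfrac12\int_\Sigma {\rm div}_\Sigma\big(|u|^2 u\big)\,d\Sigma-\tfrac12\int_\Sigma |u|^2\,{\rm div}_\Sigma u\,d\Sigma=0$, where the first integral vanishes by the surface divergence theorem (no boundary) and the second by incompressibility ${\rm div}_\Sigma u=0$. For the viscous term, I would integrate by parts on $\Sigma$ to move ${\rm div}_\Sigma$ off $\cT_\Sigma$:
\[
\int_\Sigma \big({\rm div}_\Sigma\,\cT_\Sigma\,\big|\,u\big)\,d\Sigma
=-\int_\Sigma \big(\cT_\Sigma : \nabla_\Sigma u\big)\,d\Sigma,
\]
again using that $\Sigma$ is closed. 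Now insert \eqref{T-Sigma}, $\cT_\Sigma=2\mu_s\cD_\Sigma(u)-\pi\cP_\Sigma$. The pressure term contributes $\int_\Sigma \pi\,(\cP_\Sigma:\nabla_\Sigma u)\,d\Sigma=\int_\Sigma \pi\,{\rm div}_\Sigma u\,d\Sigma=0$ by incompressibility. For the remaining term I would use the symmetry and tangential structure of $\cD_\Sigma(u)$: since $\cD_\Sigma(u)=\cP_\Sigma\cD_\Sigma(u)\cP_\Sigma$ is symmetric, $\cD_\Sigma(u):\nabla_\Sigma u=\cD_\Sigma(u):\tfrac12\big(\cP_\Sigma(\nabla_\Sigma u+[\nabla_\Sigma u]^{\sf T})\cP_\Sigma\big)=|\cD_\Sigma(u)|^2$, where the projections may be inserted for free because $\cD_\Sigma(u)$ already carries them. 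Collecting the pieces yields \eqref{energy-dissipation}.

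The main technical point — and the only real obstacle — is justifying the surface integration by parts for the tensor $\cT_\Sigma$ and the identity $\cD_\Sigma(u):\nabla_\Sigma u=|\cD_\Sigma(u)|^2$, which requires care with the covariant derivative $\nabla_\Sigma$ on $\Sigma$ and with the fact that $\nabla_\Sigma u$ need not be tangential while $\cD_\Sigma(u)$ is. These computations rely on the properties of the surface differential operators collected in \cite{PrSi16} and in the Appendix; once those are in hand the argument is a routine integration by parts together with the divergence theorem on the closed manifold $\Sigma$, using that $(u,\pi)$ is smooth enough to make all the manipulations legitimate.
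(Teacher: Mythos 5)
Your proposal is correct and takes essentially the same route as the paper: differentiate the energy, substitute the momentum equation, kill the convective and pressure terms via $(u\cdot\nabla_\Sigma u\,|\,u)=\tfrac12(\nabla_\Sigma|u|^2\,|\,u)$, the surface divergence theorem and ${\rm div}_\Sigma u=0$, and reduce the viscous term to $-2\mu_s\int_\Sigma|\cD_\Sigma(u)|^2\,d\Sigma$ by the tensor integration-by-parts identity. The only cosmetic difference is that the paper invokes the pointwise identities \eqref{divergence-parts} and Lemma~\ref{lem:app} before integrating, rather than integrating the full tensor $\cT_\Sigma$ by parts in one step as you do.
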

\begin{proof}
By Remark~\ref{rem:tangential} we know that  $u(t)\in {\sf T}\Sigma$ for each $t \in (0,T)$.
In order not to overburden the notation, we suppress the variables $(t,p)\in (0,T)\times\Sigma $ in the
following computation.
It follows from \eqref{divergence-parts} and Lemma~\ref{lem:app} that
\begin{equation*}
\begin{aligned}
\big(\cP_\Sigma(u\cdot \nabla_\Sigma u)\big| u) &=\frac{1}{2} \big(\nabla_\Sigma |u|^2 \big| u\big), \quad
\big(\cP_\Sigma {\rm div}_\Sigma (\pi \cP_\Sigma) \big| u)=(\nabla_\Sigma \pi |u), \\
\big( { \rm div}_\Sigma \cD_\Sigma (u)\big| u)&= {\rm div}_\Sigma (\cD_\Sigma(u)u)- |\cD_\Sigma(u)|^2,
\end{aligned}
\end{equation*}
where $(\cdot | \cdot)$ denotes the Euclidean inner product.
Hence, the surface divergence theorem~\eqref{divergence-theorem}
and the relation  ${\rm div}_\Sigma u=0$ yield
\begin{equation*}
\begin{aligned}
\frac{d}{dt} {\sf E}(t)
&= \int_\Sigma (\partial_t u | u)\,d\Sigma  \\
&= \int_\Sigma \big(\! - \! \big(P_\Sigma(u \cdot \nabla_\Sigma u) \big| u\big)
+2\mu_s\big(P_\Sigma{\rm div}_\Sigma \cD_\Sigma(u) \big| u\big) -
 \big(P_\Sigma{\rm div}_\Sigma (\pi\cP_\Sigma) \big| u\big) \big)\,d\Sigma\\
&= -2\mu_s \int_\Sigma |\cD_\Sigma(u)|^2.
\end{aligned}
\end{equation*}
\end{proof}
\noindent
We now characterize the equilibria of  \eqref{NS-surface}.
It will turn out that at equilibrium, the gradient of the  pressure is completely determined by the velocity.
Moreover, the equilibrium velocity fields correspond exactly to the Killing fields of $\Sigma$.
\begin{proposition}\label{pro:equilibria}
\mbox{}
\begin{itemize}
\item[(a)]
Let ${\mathfrak E}:=\{(u,\pi)\in C^2(\Sigma, {\sf T}\Sigma)\times C^1(\Sigma): (u,\pi) \text{ is an equilibrium
for~\eqref{NS-surface}}\}$. Then 
$$ {\mathfrak E}=\Big\{(u,\pi): {\rm div}_\Sigma u=0,\; \cD_\Sigma(u)=0,\; \pi=\frac{1}{2}|u|^2+c\Big\},$$
where $c$ is an arbitrary constant.
\vspace{2mm}
\item[(b)] 

Suppose $\cD_\Sigma(u)=0$ for $u\in C^1(\Sigma, {\sf T}\Sigma)$.
Then ${\rm div}_\Sigma u=0$.
\vspace{2mm}
\item[(c)]
The $C^1$-tangential fields satisfying the relation $\cD_\Sigma(u)=0$ correspond exactly to
the {Killing fields} of $\Sigma$.
\end{itemize}
\end{proposition}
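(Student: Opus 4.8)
The plan is to prove the three items in the order (c), (b), (a), since the argument for (a) draws on both of the others.

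\textbf{Item (c).} By definition, a field $u\in C^1(\Sigma,{\sf T}\Sigma)$ is a Killing field of $\Sigma$ --- endowed with the metric $g_\Sigma$ induced from $\R^{d+1}$ --- precisely when $\cL_u g_\Sigma=0$, equivalently when $g_\Sigma(\nabla_X u,Y)+g_\Sigma(\nabla_Y u,X)=0$ for all tangential $X,Y$, where $\nabla$ denotes the Levi-Civita connection of $(\Sigma,g_\Sigma)$. The substance of (c) is therefore the pointwise identity $(\cD_\Sigma(u)X\,|\,Y)=\tfrac12\big(g_\Sigma(\nabla_X u,Y)+g_\Sigma(\nabla_Y u,X)\big)$ for tangential $X,Y$. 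To obtain it I would invoke the Gauss formula: the tangential part of the ambient directional derivative $\partial_X u$ equals $\nabla_X u$, and since $(\nabla_\Sigma u)X=\partial_X u$ for tangential $X$, this reads $\cP_\Sigma(\nabla_\Sigma u)X=\nabla_X u$. Feeding this, together with $\cP_\Sigma^{\sf T}=\cP_\Sigma$ and $\cP_\Sigma X=X$, $\cP_\Sigma Y=Y$, into the definition \eqref{D-Sigma} of $\cD_\Sigma$ produces the claimed identity after a short manipulation of the Euclidean inner product. Consequently $\cD_\Sigma(u)=0\iff\cL_u g_\Sigma=0\iff u$ is a Killing field. (If the Appendix already records $\cD_\Sigma(u)=\tfrac12\cL_u g_\Sigma$, this item reduces to a citation.)

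\textbf{Item (b).} Here I would simply take the trace of $\cD_\Sigma(u)$. Using $\cP_\Sigma^2=\cP_\Sigma$, $\cP_\Sigma^{\sf T}=\cP_\Sigma$, cyclicity of the trace (so that ${\rm tr}(\cP_\Sigma M\cP_\Sigma)={\rm tr}(\cP_\Sigma M)$ for any matrix field $M$), and ${\rm tr}\,M^{\sf T}={\rm tr}\,M$, one checks that ${\rm tr}\big(\cP_\Sigma[\nabla_\Sigma u]^{\sf T}\cP_\Sigma\big)={\rm tr}\big(\cP_\Sigma\nabla_\Sigma u\,\cP_\Sigma\big)={\rm div}_\Sigma u$ for $u\in{\sf T}\Sigma$, whence ${\rm tr}\,\cD_\Sigma(u)={\rm div}_\Sigma u$. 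In particular $\cD_\Sigma(u)=0$ forces ${\rm div}_\Sigma u=0$.

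\textbf{Item (a).} For the inclusion ``$\subseteq$'', let $(u,\pi)$ be an equilibrium, i.e. a solution of the stationary version of \eqref{NS-surface}. Taking the Euclidean inner product of the stationary momentum equation with $u$ and integrating over $\Sigma$, the computation in the proof of Proposition~\ref{pro:energy-dissipation} carries over verbatim: the convective and pressure contributions vanish because ${\rm div}_\Sigma u=0$, while the viscous term contributes $2\mu_s\int_\Sigma(\cP_\Sigma\,{\rm div}_\Sigma\cD_\Sigma(u)\,|\,u)\,d\Sigma=-2\mu_s\int_\Sigma|\cD_\Sigma(u)|^2\,d\Sigma$ by the surface divergence theorem, so that $\int_\Sigma|\cD_\Sigma(u)|^2\,d\Sigma=0$ and hence $\cD_\Sigma(u)=0$. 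By (b) this re-gives ${\rm div}_\Sigma u=0$, and by (c) the field $u$ is Killing, which yields the pointwise identity $\cP_\Sigma(u\cdot\nabla_\Sigma u)=\nabla_u u=-\tfrac12\nabla_\Sigma|u|^2$ (take $X=u$ in the Killing relation and use $g_\Sigma(\nabla_Y u,u)=\tfrac12 Y|u|^2$). Since $\cD_\Sigma(u)=0$ reduces the Boussinesq-Scriven tensor to $\cT_\Sigma=-\pi\cP_\Sigma$ and $\cP_\Sigma\,{\rm div}_\Sigma(\pi\cP_\Sigma)=\nabla_\Sigma\pi$ (cf. the proof of Proposition~\ref{pro:energy-dissipation}), the momentum equation becomes $-\tfrac12\nabla_\Sigma|u|^2+\nabla_\Sigma\pi=0$, i.e. $\nabla_\Sigma\big(\pi-\tfrac12|u|^2\big)=0$; since $\Sigma$ is connected, $\pi=\tfrac12|u|^2+c$ for some constant $c$. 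For ``$\supseteq$'' one runs the last step in reverse: if ${\rm div}_\Sigma u=0$, $\cD_\Sigma(u)=0$ and $\pi=\tfrac12|u|^2+c$, then $u$ is Killing by (c), so $\cP_\Sigma(u\cdot\nabla_\Sigma u)=-\tfrac12\nabla_\Sigma|u|^2=-\nabla_\Sigma\pi$, while $\cP_\Sigma\,{\rm div}_\Sigma\cT_\Sigma=2\mu_s\cP_\Sigma\,{\rm div}_\Sigma\cD_\Sigma(u)-\nabla_\Sigma\pi=-\nabla_\Sigma\pi$; thus the momentum equation of the stationary version of \eqref{NS-surface} holds, and ${\rm div}_\Sigma u=0$ holds by assumption, so $(u,\pi)\in{\mathfrak E}$.

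\textbf{Main obstacle.} The only step that is more than bookkeeping is the identification in (c) of the extrinsic rate-of-strain tensor $\cD_\Sigma(u)$ with $\tfrac12\cL_u g_\Sigma$: one has to pass carefully between tangential fields on the embedded surface and vector fields on the Riemannian manifold $(\Sigma,g_\Sigma)$ and use the Gauss formula to relate $\cP_\Sigma\nabla_\Sigma u$ to the Levi-Civita covariant derivative. Everything else --- the trace identity in (b), the energy identity in (a), and the algebra with $\cP_\Sigma$ and $\nabla_\Sigma$ --- is routine and is already essentially contained in the proof of Proposition~\ref{pro:energy-dissipation}.
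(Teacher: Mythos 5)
Your proposal is correct and follows essentially the same route as the paper: the energy identity from Proposition~\ref{pro:energy-dissipation} forces $\cD_\Sigma(u)=0$ at equilibrium, the trace identity $(\ref{divergence-parts})_3$ gives (b), the identification of $\cD_\Sigma(u)=0$ with the Killing condition is exactly Remark~\ref{rem:app}(e), and the pressure relation $\nabla_\Sigma\pi=\tfrac12\nabla_\Sigma|u|^2$ follows from the momentum equation together with $\cD_\Sigma(u)=0$ (your Killing-field computation of $\nabla_u u$ is the same algebra as the paper's use of Lemma~\ref{lem:app}(b),(c)). The only cosmetic slip is the convention $(\nabla_\Sigma u)X=\partial_X u$ versus the paper's $(\nabla_\Sigma u)^{\sf T}X=\partial_X u$, which is immaterial after symmetrization.
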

\begin{proof}
(a) Suppose $(u,\pi)$ is an equilibrium of \eqref{NS-surface}.
By the second line in \eqref{NS-surface}, ${\rm div}_\Sigma u=0$.
It follows from Proposition~\ref{pro:energy-dissipation}
that $\cD_\Sigma(u)=0$
and the first line in~\eqref{NS-surface} then implies
 $$\cP_\Sigma( u\cdot \nabla_\Sigma u) +\nabla_\Sigma \pi=0.$$
This, together with $\cD_\Sigma(u)=0$ and  Lemma~\ref{lem:app}(b),(c), yields
\begin{equation*}
\nabla_\Sigma \pi = -\cP_\Sigma(u\cdot\nabla_\Sigma u)
=-\cP_\Sigma((\nabla_\Sigma u)^{\sf T}u) =\cP_\Sigma((\nabla_\Sigma u)u)=\frac{1}{2}\nabla_\Sigma |u|^2.
\end{equation*}
Analogous arguments show that the inverse implication also holds true.
\smallskip\\
(b) This is a direct consequence of (\ref{divergence-parts})$_3$.
\smallskip\\
(c) This follows from Remark~\ref{rem:app}(e).
\end{proof}
\medskip

Suppose that $(u,\pi)$ is an equilibrium solution of \eqref{NS-surface}.
Then we point out the following interesting observation.

Let $\gamma(s)$ be the trajectory of a fluid particle on $\Sigma$. Then $\gamma$ satisfies the
differential equation
\begin{equation*}
\dot{\gamma}(s) = u(\gamma(s)),\quad s\in\R,\quad \gamma(0)=\gamma_0\in\Sigma.
\end{equation*}
Using the assertion in Proposition~\ref{pro:equilibria}(a), we obtain
\begin{equation*}
\begin{aligned}
\frac{d}{ds} (\pi\circ \gamma) (s)
& = \big( \nabla_\Sigma \pi  \big| u\big)(\gamma(s))
 = \frac{1}{2} \big( \nabla_\Sigma |u|^2  \big | u\big) (\gamma(s)) \\
& = \big( (\nabla_\Sigma u)^{\sf T} u \big| u \big) (\gamma(s))
 = \big( \cD_\Sigma (u) u \big| u \big)(\gamma (s))=0,
\end{aligned}
\end{equation*}
as $\cD_\Sigma(u)=0$.
Hence, $\pi$ is constant along stream lines of the flow.

Furthermore,
\begin{equation*}
\begin{aligned}
\ddot{\gamma}(s) &= u(\gamma(s))\cdot\nabla_\Sigma u(\gamma(s))=u^i(\gamma(s))(\partial_i u)(\gamma(s))\\
&=u^i(\gamma(s))(\partial_i u_j-\Lambda_{ij}^ku_k)(\gamma(s))\tau^j(\gamma(s))+(l_i^j u^iu_j)(\gamma(s))\nu_\Sigma(\gamma(s))\\
&=[P_\Sigma(u\cdot\nabla_\Sigma u)](\gamma(s))+[(L_\Sigma u|u)\nu_\Sigma](\gamma(s)).
\end{aligned}
\end{equation*}
A short computation shows that
$$(\nabla_\Sigma P_\Sigma) u=l_i^j(\tau_j\otimes \nu_\Sigma)u^i,$$
for tangential vector fields $u$, hence $(u|\nabla_\Sigma P_\Sigma u)=(L_\Sigma u|u)\nu_\Sigma$.
Therefore, we obtain the relation
\begin{align}\label{constraint1}
\begin{split}
\ddot{\gamma}(s)&=[P_\Sigma(u\cdot\nabla_\Sigma u)](\gamma(s))+[(u|\nabla_\Sigma P_\Sigma u)](\gamma(s))\\
&=-[\nabla_\Sigma\pi](\gamma(s))+[(u|\nabla_\Sigma P_\Sigma u)](\gamma(s)),
\end{split}
\end{align}
since $\nabla_\Sigma\pi =-P_\Sigma(u\cdot\nabla_\Sigma u)$ in an equilibrium. Let us compare this ODE with the following second order system with constraints:
$$
\ddot{x}=f(x,\dot{x}),\quad g(x)=0.
$$
Here, $f:\R^{d}\to\R^{d}$ and $g:\R^d\to\R^{d-m}$ are smooth with ${\rm rank}\ g'(x)=d-m$ for each $x\in g^{-1}(0)$. In general, the ODE $\ddot{x}=f(x,\dot{x})$ does not leave $\Sigma:=g^{-1}(0)$ invariant. However, it can be shown that the movement of a particle under the constraint $g(x)=0$ in the force field $f$ is governed by the ODE
\begin{equation}\label{constraint2}
\ddot{x}=P_\Sigma(x) f(x,\dot{x})+(\dot{x}|\nabla_\Sigma P_\Sigma(x) \dot{x}).
\end{equation}
In other words, the effective force field is the sum of the tangential part of $f$ on $\Sigma$ and the constraint force $(\dot{x}|\nabla_\Sigma P_\Sigma(x) \dot{x})$, which results from the geodesic flow, see also \cite[Section 13.5]{PrWi19}.

Observe that the structure of \eqref{constraint1} and \eqref{constraint2} are the same.
Of course, in our situation, it follows from the ODE $\dot{\gamma}=u(\gamma)$ that $\gamma(s)\in\Sigma$, $\dot{\gamma}(s)\in \mathsf{T}_{\gamma(s)}\Sigma$, provided $\gamma(0)=\gamma_0\in\Sigma$, since $u$ is a tangential vector field on $\Sigma$. 

Finally, note that the energy $E(s):=\frac{1}{2}|\dot{\gamma}(s)|^2+\pi(\gamma(s))$ is conserved, i.e.\ $\dot{E}(s)=0$, since $(u|\nabla_\Sigma P_\Sigma u)$ is perpendicular to $\mathsf{T}\Sigma$ and $\dot{\gamma}\in\mathsf{T}\Sigma$. This known as Bernoulli's principle.

\goodbreak

\section{Existence of solutions}
In this section, we show that there exists a unique solution
$$u\in H_{p,\mu}^1((0,a);L_q(\Sigma,\mathsf{T}\Sigma))\cap L_{p,\mu}((0,a);H_q^2(\Sigma,\mathsf{T}\Sigma)),\quad \pi\in L_{p,\mu}((0,a);\dot{H}_q^1(\Sigma))$$
of \eqref{NS-surface} resp.\ \eqref{NS-surface-2} for some suitable number $a>0$. To this end, we first consider the principal linearization of \eqref{NS-surface-2} and show that the corresponding linear operator has $L_p$-$L_q$-maximal regularity in suitable function spaces. This will enable us to apply the contraction mapping principle to prove the existence and uniqueness of a strong solution to \eqref{NS-surface-2}.

\subsection{The principal linearization}

We consider the following linear problem
\begin{equation}
\label{Stokes-surface-1}
\begin{aligned}
\partial_t u+\omega u - \mu_s \Delta_\Sigma u + \nabla_\Sigma \pi&=f &&\text{on}\;\;\Sigma \\
{\rm div}_\Sigma u &=g &&\text{on} \;\; \Sigma \\
                       u(0) &= u_0  &&\text{on}\;\; \Sigma,
\end{aligned}
\end{equation}
where $\omega>0$. Here and in the sequel, we assume without loss of generality that $\varrho=1$. The main result of this section reads as follows.
\begin{theorem}\label{Lin-Theorem-1}
Suppose $\Sigma$ is a smooth, compact, connected, embedded (oriented) hypersurface in $\R^{d+1}$ without boundary and let $1<p,q<\infty$, $\mu\in (1/p,1]$. Then, there exists $\omega_0>0$ such that for each $\omega>\omega_0$, problem \eqref{Stokes-surface-1} admits a unique solution
$$u\in H_{p,\mu}^1(\R_+;L_q(\Sigma,\mathsf{T}\Sigma))\cap L_{p,\mu}(\R_+;H_q^2(\Sigma,\mathsf{T}\Sigma))=:\mathbb{E}_{1,\mu}(\Sigma),\ \pi\in L_{p,\mu}(\R_+;\dot{H}_q^1(\Sigma)),$$
if and only if the data $(f,g,u_0)$ are subject to the following conditions
\begin{enumerate}
\item $f\in L_{p,\mu}(\R_+;L_q(\Sigma,\mathsf{T}\Sigma))=:\mathbb{E}_{0,\mu}(\Sigma)$
\item $g\in L_{p,\mu}(\R_+;H_q^1(\Sigma))$, $g\in H_{p,\mu}^1(\R_+;\dot{H}_q^{-1}(\Sigma))$
\item $u_0\in B_{qp}^{2\mu-2/p}(\Sigma,\mathsf{T}\Sigma)$
\item $g(0)={\rm div}_\Sigma u_0$.
\end{enumerate}
Moreover, the solution $(u,\pi)$ depends continuously on the given data $(f,g,u_0)$ in the corresponding spaces.
\end{theorem}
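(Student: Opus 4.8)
The plan is to reduce the problem to known maximal regularity results for the Stokes system on the tangent bundle, treating the zeroth-order curvature term $\mu_s(\kappa_\Sigma L_\Sigma - L_\Sigma^2)u = \mu_s \mathrm{Ric}_\Sigma u$ as a lower-order perturbation (this is why one works with $\omega > \omega_0$ rather than $\omega > 0$). First I would recall from \cite[Sections 6 and 7]{PrSi16} the maximal regularity of the Bochner Laplacian $\Delta_\Sigma$ acting on $L_q(\Sigma,\mathsf{T}\Sigma)$: the operator $A_0 := -\mu_s\Delta_\Sigma$ admits a bounded $\mathcal H^\infty$-calculus (or at least $\mathcal R$-sectoriality with angle $<\pi/2$) on $L_q(\Sigma,\mathsf{T}\Sigma)$ after a suitable shift, so that the scalar-type parabolic problem $\partial_t u + \omega u - \mu_s\Delta_\Sigma u = f$ has $L_p$-$L_q$-maximal regularity in the weighted spaces $H^1_{p,\mu}(\R_+;L_q)\cap L_{p,\mu}(\R_+;H^2_q)$, with the trace space $B^{2\mu-2/p}_{qp}(\Sigma,\mathsf{T}\Sigma)$ for the initial value; see \cite[Theorem 3.4.8]{PrSi16} or the analogous results there on weighted maximal regularity. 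The curvature term is a bounded operator $L_q\to L_q$ since $\Sigma$ is smooth and compact, hence it is a lower-order perturbation that is absorbed by choosing $\omega_0$ large (equivalently, it does not affect $\mathcal R$-sectoriality for large spectral parameter).

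Next I would incorporate the divergence constraint and the pressure. The standard device is to first solve the divergence equation: given $g$ as in conditions (2) and (4), construct a corrector $u_g \in \mathbb E_{1,\mu}(\Sigma)$ with $\mathrm{div}_\Sigma u_g = g$ and $u_g(0) = u_0$ — here one uses that $\mathrm{div}_\Sigma$ admits a bounded right inverse from $H^1_q(\Sigma)$ into $H^2_q(\Sigma,\mathsf T\Sigma)$ and from $\dot H^{-1}_q(\Sigma)$ into $L_q(\Sigma,\mathsf T\Sigma)$, compatibly, so that the time-regularity of $g$ in both scales (2) transfers to $u_g$, and the compatibility (4) makes the initial condition consistent. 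Subtracting $u_g$ reduces to the case $g=0$, $u_0 \in \{v: \mathrm{div}_\Sigma v = 0\}$ (more precisely $u_0$ with $\mathrm{div}_\Sigma u_0$ already corrected away). Then I would apply the Helmholtz projection $\mathbb P_\Sigma$ on $L_q(\Sigma,\mathsf T\Sigma)$ (constructed in Section 3 of this paper): projecting the momentum equation removes $\nabla_\Sigma\pi$ and yields the abstract evolution equation $\partial_t u + \omega u + \mathbb P_\Sigma A_0 u = \mathbb P_\Sigma f$ on the solenoidal subspace $L_{q,\sigma}(\Sigma)$. One must check that $\mathbb P_\Sigma$ commutes (up to lower order) with $\Delta_\Sigma$ or, more robustly, that the Stokes operator $\mathbb P_\Sigma A_0|_{L_{q,\sigma}}$ inherits $\mathcal R$-sectoriality; this is exactly the content one extracts from \cite[Sections 6,7]{PrSi16}. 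Maximal regularity for this reduced problem then gives $u$, and the pressure $\pi\in L_{p,\mu}(\R_+;\dot H^1_q(\Sigma))$ is recovered by $\nabla_\Sigma\pi = (I-\mathbb P_\Sigma)(f - \partial_t u - \omega u + \mu_s\Delta_\Sigma u)$, noting the right-hand side lies in $L_{p,\mu}(\R_+;L_q)$ and is a gradient, so $\pi$ is determined up to a constant, consistent with the use of the homogeneous space $\dot H^1_q$.

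For the converse (necessity of the conditions) and the continuous dependence, one reads off (1) and (3) directly from the definition of the solution class and the trace theorem $\mathbb E_{1,\mu}(\Sigma)\hookrightarrow BUC([0,\infty);B^{2\mu-2/p}_{qp})$; condition (2) follows by applying $\mathrm{div}_\Sigma$ to the equation — $\mathrm{div}_\Sigma u \in L_{p,\mu}(\R_+;H^1_q)$ is immediate from $u\in L_{p,\mu}(\R_+;H^2_q)$, while $g = \mathrm{div}_\Sigma u \in H^1_{p,\mu}(\R_+;\dot H^{-1}_q)$ follows from $\partial_t u \in L_{p,\mu}(\R_+;L_q)$ together with the fact that $\mathrm{div}_\Sigma:L_q(\Sigma,\mathsf T\Sigma)\to\dot H^{-1}_q(\Sigma)$ is bounded; and (4) is the trace of the divergence equation at $t=0$, which makes sense since $g$ has a trace in $\dot H^{-1}_q$-sense, or better, because $g\in H^1_{p,\mu}(\R_+;\dot H^{-1}_q)\cap L_{p,\mu}(\R_+;H^1_q)$ embeds into $C([0,\infty);$ (real interpolation space)$)$ where the compatibility is meaningful. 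The continuous dependence is the open mapping theorem applied to the (now established) bijective bounded solution operator, or is tracked quantitatively through each of the steps above. The main obstacle I anticipate is the interaction of the Helmholtz projection with the Bochner Laplacian on a curved manifold: unlike the flat case, $\mathbb P_\Sigma$ and $\Delta_\Sigma$ do not commute, and the commutator is governed by curvature; the key is that this commutator is lower-order, so one either absorbs it into the large shift $\omega_0$ or invokes the perturbation theory for $\mathcal R$-sectorial operators. Establishing the mapping properties of the right inverse of $\mathrm{div}_\Sigma$ simultaneously in the $H^1_q$ and $\dot H^{-1}_q$ scales with matching time regularity is the other technical point, but this is by now standard once the corresponding elliptic solvability (the surface analogue of Bogovskii's operator) is in place.
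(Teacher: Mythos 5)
Your proposal is viable but follows a genuinely different route from the paper. The paper does \emph{not} introduce the Helmholtz projection or the surface Stokes operator until \emph{after} Theorem~\ref{Lin-Theorem-1} is proved; the logical order is reversed relative to yours. The paper's proof is a localization argument: after the same preliminary reduction to $({\rm div}_\Sigma f,g,u_0)=0$ that you propose (solve a parabolic problem for $v$, then correct the divergence by $\nabla_\Sigma\Phi$ with $\Delta_\Sigma^L\Phi={\rm div}_\Sigma v-g$ --- this is exactly your ``gradient-type right inverse of ${\rm div}_\Sigma$''), the system is written in local charts, the coefficients are frozen to produce full-space Stokes problems with maximal regularity from \cite[Theorem 7.1.1]{PrSi16}, and the variable coefficients and commutator terms are absorbed by a Neumann series in an $\omega$-weighted norm. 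The one genuinely delicate point there is that the commutators produce the pressure itself (not only $\nabla_\Sigma\pi$), which is why the paper first proves Proposition~\ref{Proposition-Reg-Pressure} on the extra time regularity of $P_0\pi$; your scheme avoids this issue entirely because the pressure is eliminated by projection before any estimates are made. Existence is then obtained in the paper by a second Neumann series built from the parabolic solution operator, again without ever projecting.

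The one soft spot in your argument is the assertion that $\mathcal R$-sectoriality (equivalently, maximal regularity) of the projected operator $P_{H,\Sigma}(-\mu_s\Delta_\Sigma)$ on $L_{q,\sigma}(\Sigma,\mathsf T\Sigma)$ ``is exactly the content one extracts from \cite[Sections 6,7]{PrSi16}.'' It is not: those sections give the unconstrained parabolic problem on manifolds and the Stokes problem in Euclidean domains, and the maximal regularity of the \emph{surface} Stokes operator is precisely what this theorem is establishing. Your fallback --- that $[P_{H,\Sigma},\Delta_\Sigma]$ is of lower order --- is correct but needs an actual proof; a priori the commutator of a nonlocal projection with a second-order operator need not be of lower order. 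The clean way to justify it is the Weitzenb\"ock identity \eqref{Hodge}, $\Delta_\Sigma=\Delta_H+{\rm Ric}_\Sigma$: the Hodge Laplacian commutes \emph{exactly} with the Helmholtz projection (it preserves the summands of the Hodge decomposition), so the solenoidal subspace is invariant for $\Delta_H$, maximal regularity restricts to it, and ${\rm Ric}_\Sigma$ is a bounded ($0$-th order) perturbation absorbed by the shift $\omega$. With that ingredient supplied, your argument closes; it is essentially the route of \cite{Tay92, Maz03}, which the paper explicitly sets aside. What the paper's localization proof buys in exchange for its heavier bookkeeping is independence from the Hodge-commutation trick, which is what makes the method extendable to manifolds with boundary, as announced in the introduction. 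Your treatment of necessity and of the recovery of $\pi$ from $(I-P_{H,\Sigma})$ applied to the equation is fine.
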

\begin{remark}\mbox{}
\begin{enumerate}
\item[(a)] In Theorem \ref{Lin-Theorem-1}, we use the notations
$$\dot{H}_{q}^1(\Sigma):=\{w\in L_{1,loc}(\Sigma):\nabla_\Sigma w\in L_{q}(\Sigma,\mathsf{T}\Sigma)\},\quad\dot{H}_q^{-1}(\Sigma):=(\dot{H}_{q'}^1(\Sigma))^*$$
and we identify $g$ with the functional $[\phi\mapsto\int_\Sigma g\phi \,d\Sigma]$ on $\dot{H}_{q'}^1(\Sigma)$. 
\item[(b)] Note that the assumption $g\in \dot{H}_q^{-1}(\Sigma)$ includes the condition $\int_\Sigma g\,d\Sigma=0$.
\item[(c)] The assertion $\pi\in L_{p,\mu}(\R_+;\dot{H}_q^1(\Sigma))$ means that $\pi$ is unique up to a constant.
\item[(d)] Necessity of the conditions (1)-(4) in Theorem \ref{Lin-Theorem-1} is well known, we refer the reader e.g.\ to the monograph \cite[Chapter 7]{PrSi16}.
\end{enumerate}
\end{remark}

\subsection{Pressure Regularity}

It is a remarkable fact that the pressure $\pi$ has additional time-regularity in some special cases.
\begin{proposition}\label{Proposition-Reg-Pressure}
In the situation of Theorem \ref{Lin-Theorem-1}, assume further
$$u_0=0,\quad g=0,\quad{\rm div}_\Sigma f=0\quad\text{on}\ \Sigma.$$
Then $P_0\pi\in{_0}H_{p,\mu}^\alpha(\R_+;L_q(\Sigma))$, for $\alpha\in (0,1/2]$, where
$$P_0v:=v-\frac{1}{|\Sigma|}\int_\Sigma v \,d\Sigma$$
for $v\in L_1(\Sigma)$. Furthermore, there exists a constant $C>0$ such that the estimate
$$|P_0\pi|_{L_{p,\mu}(L_q(\Sigma))}\le C|u|_{L_{p,\mu}(H_q^1(\Sigma))}$$
is valid.
\end{proposition}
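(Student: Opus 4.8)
The plan is to derive an equation for the pressure alone and then read off its time-regularity from maximal regularity theory. Since $g = 0$ and $u_0 = 0$, applying the surface divergence to the first line of \eqref{Stokes-surface-1} and using ${\rm div}_\Sigma u = 0$ together with ${\rm div}_\Sigma f = 0$ gives
\begin{equation*}
{\rm div}_\Sigma \nabla_\Sigma \pi = \mu_s\, {\rm div}_\Sigma \Delta_\Sigma u - \partial_t\, {\rm div}_\Sigma u = \mu_s\,{\rm div}_\Sigma \Delta_\Sigma u,
\end{equation*}
so that $\pi$ solves an elliptic (Laplace-Beltrami type) problem on $\Sigma$ with right-hand side depending only on $u$ and no time derivatives. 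First I would make this rigorous: the operator ${\rm div}_\Sigma \nabla_\Sigma$ acting on $\dot H_q^1(\Sigma)/\mathbb{R}$ is boundedly invertible onto $\dot H_q^{-1}(\Sigma)$ (this is the standard solvability of the Laplace-Beltrami equation on a closed manifold, and is already implicitly used to construct the solution in Theorem~\ref{Lin-Theorem-1}). Hence $P_0\pi = (\,{\rm div}_\Sigma\nabla_\Sigma)^{-1}\big(\mu_s\,{\rm div}_\Sigma\Delta_\Sigma u\big)$, where the right-hand side is, for each fixed $t$, a bounded linear function of $u(t) \in H_q^2(\Sigma,\mathsf{T}\Sigma)$ with values in $L_q(\Sigma)$; indeed ${\rm div}_\Sigma\Delta_\Sigma u \in \dot H_q^{-1}(\Sigma)$ depends continuously on $u \in H_q^1$, and $(\,{\rm div}_\Sigma\nabla_\Sigma)^{-1}$ maps $\dot H_q^{-1}(\Sigma)\cap\{{\rm mean}=0\}$ into $\dot H_q^1(\Sigma)$, but one checks it actually gains two derivatives relative to the source written in divergence form, landing $P_0\pi$ in $L_q(\Sigma)$ with the bound $|P_0\pi|_{L_q(\Sigma)} \le C|u|_{H_q^1(\Sigma)}$. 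Integrating the $p$-th power in time (with the weight $t^{p(1-\mu)}$) over $\R_+$ yields the claimed estimate $|P_0\pi|_{L_{p,\mu}(L_q(\Sigma))} \le C|u|_{L_{p,\mu}(H_q^1(\Sigma))}$.

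Next I would upgrade this spatial bound to the time-regularity assertion $P_0\pi \in {_0}H_{p,\mu}^\alpha(\R_+;L_q(\Sigma))$ for $\alpha \in (0,1/2]$. The point is that the map $u \mapsto P_0\pi$ is a fixed bounded linear operator $T$ from $H_q^1(\Sigma,\mathsf{T}\Sigma)$ to $L_q(\Sigma)$, applied pointwise in time; such an operator maps $H_{p,\mu}^\alpha(\R_+;H_q^1)$ into $H_{p,\mu}^\alpha(\R_+;L_q)$. So it suffices to know $u \in {_0}H_{p,\mu}^{\alpha}(\R_+;H_q^1(\Sigma,\mathsf{T}\Sigma))$ for $\alpha \le 1/2$. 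This follows from the maximal regularity class $\mathbb{E}_{1,\mu}(\Sigma) = H_{p,\mu}^1(\R_+;L_q)\cap L_{p,\mu}(\R_+;H_q^2)$ by mixed-derivative / interpolation: an element of $H_{p,\mu}^1(L_q)\cap L_{p,\mu}(H_q^2)$ lies in $H_{p,\mu}^{1-\theta}(\R_+;H_q^{2\theta})$ for $\theta\in[0,1]$, and taking $\theta = 1/2$ gives $H_{p,\mu}^{1/2}(\R_+;H_q^1)$, hence $H_{p,\mu}^{\alpha}(\R_+;H_q^1)$ for all $\alpha\le 1/2$; the vanishing trace at $t=0$ (because $u_0=0$) puts us in the subspace ${_0}H_{p,\mu}^\alpha$, and correspondingly $P_0\pi$ has vanishing trace so lies in ${_0}H_{p,\mu}^\alpha(\R_+;L_q)$. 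I would cite the relevant mixed-derivative theorem and trace characterization from \cite[Chapter 3]{PrSi16}.

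The main obstacle, I expect, is making the elliptic step fully precise in the $\dot H_q^1$/$\dot H_q^{-1}$ scale: one must verify that applying ${\rm div}_\Sigma$ to the momentum equation is legitimate at the regularity available (the equation holds in $L_q$, so its $\dot H_q^{-1}$-valued divergence is well-defined), that the resulting source $\mu_s\,{\rm div}_\Sigma\Delta_\Sigma u$ has zero mean over $\Sigma$ (it is an exact divergence, so this is automatic by the divergence theorem \eqref{divergence-theorem}), and — the genuinely substantive point — that the solution operator of the Laplace-Beltrami equation with source in divergence form produces an $L_q$-function rather than merely an $\dot H_q^1$-function; equivalently, that $(\,{\rm div}_\Sigma\nabla_\Sigma)^{-1}{\rm div}_\Sigma$ is bounded from $H_q^1(\Sigma,\mathsf{T}\Sigma)$ into $L_q(\Sigma)$, which one obtains from elliptic regularity on the closed manifold $\Sigma$ (the operator is a classical pseudodifferential operator of order $0$). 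Once this spatial mapping property is in hand, the time-regularity is a soft consequence of its $t$-independence combined with maximal regularity, and the quantitative estimate drops out by integrating in time against the weight. A secondary technical point is bookkeeping the temporal weight $\mu$ and the ``$_0$'' (vanishing initial trace) decoration consistently throughout, but this is routine given the framework of \cite{PrSi16}.
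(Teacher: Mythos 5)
Your argument is correct and is essentially the paper's own proof in different clothing: both hinge on the observation that ${\rm div}_\Sigma(\partial_t u+\omega u-f)=0$ removes the time derivative from the pressure equation, both reduce $P_0\pi$ to a \emph{first-order} (time-independent) operator applied to $u$, and both then transfer the temporal regularity from the mixed-derivative embedding ${_0}H^1_{p,\mu}(\R_+;L_q)\cap L_{p,\mu}(\R_+;H^2_q)\hookrightarrow {_0}H^{1/2}_{p,\mu}(\R_+;H^1_q)$. The one place where you diverge is the implementation of the key spatial bound $|P_0\pi|_{L_q}\le C|u|_{H^1_q}$: the paper tests against $\nabla_\Sigma\psi$ with $\Delta^L_\Sigma\psi=P_0\phi$, rewrites $\Delta_\Sigma u$ via Proposition~\ref{pro:D-Sigma}(a) and integrates by parts once more, so that only $\cD_\Sigma(u)$ and the zero-order curvature term survive against $\nabla^2_\Sigma\psi$ and $\nabla_\Sigma\psi$; you instead invoke the pseudodifferential calculus to assert that $u\mapsto(\Delta^L_\Sigma)^{-1}{\rm div}_\Sigma\Delta_\Sigma u$ is of order one. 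Both are legitimate; the paper's explicit duality computation is self-contained, avoids $\psi$DO machinery (which the authors deliberately eschew), and produces the exact identity to which $\partial_t^\alpha$ is then applied, whereas your route outsources the order reduction to a black box. Two small inaccuracies worth fixing: $({\rm div}_\Sigma\nabla_\Sigma)^{-1}{\rm div}_\Sigma$ has order $-1$, not $0$ (what you actually need is that the full composition with $\Delta_\Sigma$ has order $1$, which is what your conclusion uses); and the statement that this operator ``gains two derivatives relative to the source written in divergence form'' deserves the explicit integration by parts (or a citation for $L_q$-boundedness of order-one classical $\psi$DOs on closed manifolds), since it is, as you yourself note, the genuinely substantive point of the proposition.
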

\begin{proof}
Let $\phi\in L_{q'}(\Sigma)$, $1/q+1/q'=1$ and solve the equation
$$\Delta_{\Sigma}^L\psi=P_0\phi\quad\text{on}\ \Sigma.$$
Here, $\Delta_{\Sigma}^L$ denotes the (scalar) Laplace-Beltrami operator on $\Sigma$. This yields a unique solution $\psi\in H_{q'}^2(\Sigma)$ with
$$|\nabla_\Sigma \psi|_{L_{q'}(\Sigma)} + |\nabla^2_\Sigma \psi |_{L_{q'}(\Sigma)}\le C |\phi |_{L_{q'}(\Sigma)}.$$
This follows, for instance, from  \cite[Theorem 6.4.3 (i)]{PrSi16} and the fact
$0$ is in the resolvent set of  $\Delta_{\Sigma}^L$, acting on functions with zero average. We then obtain from the surface divergence theorem \eqref{divergence-theorem}, 
\eqref{divergence-parts}, Proposition \ref{pro:D-Sigma} (a), and the fact that $({\rm div}_\Sigma f,g)=0$
\begin{align*}
  (\pi|P_0\phi)_\Sigma &= (\pi|\Delta_{\Sigma}^L\psi)_\Sigma=(\pi|{\rm div}_\Sigma(\nabla_\Sigma\psi))_\Sigma=-(\nabla_\Sigma\pi|\nabla_\Sigma\psi)_\Sigma \\
  &=(\partial_t u+\omega u-f|\nabla_\Sigma\psi)_\Sigma-\mu_s(\Delta_\Sigma u|\nabla_\Sigma\psi)_\Sigma\\
  &=\mu_s((\kappa_\Sigma L_\Sigma-L_\Sigma^2)u|\nabla_\Sigma\psi)_\Sigma-2\mu_s(P_\Sigma{\rm div}_\Sigma \mathcal{D}_\Sigma(u)|\nabla_\Sigma\psi)_\Sigma\\
  &=\mu_s((\kappa_\Sigma L_\Sigma-L_\Sigma^2)u|\nabla_\Sigma\psi)_\Sigma+2\mu_s\int_\Sigma\mathcal{D}_\Sigma(u):\nabla_\Sigma^2\psi \,d\Sigma,
\end{align*}
where $(\cdot|\cdot)_\Sigma$ denotes the inner product in $L_2(\Sigma)$ or $L_2(\Sigma,\mathsf{T}\Sigma)$.

Noting  that $\cD_\Sigma(u)\in {_0}H^{1/2}_{p,\mu}(\R_+; L_q(\Sigma))$, we may apply the fractional time-derivative $\partial_t^\alpha$ to the result
$$(\partial_t^\alpha\pi|P_0\phi)_\Sigma=\mu_s((\kappa_\Sigma L_\Sigma-L_\Sigma^2)\partial_t^\alpha u|\nabla_\Sigma\psi)_\Sigma+2\mu_s\int_\Sigma\partial_t^\alpha\mathcal{D}_\Sigma(u):\nabla_\Sigma^2\psi \,d\Sigma,$$
since $\partial_t^\alpha$ and $L_\Sigma$ commute. This yields the claim.
\end{proof}
Without loss of generality, we may always assume that $({\rm div}_\Sigma f,g,u_0)=0$. To see this, let $(u,\pi)$ be a solution of \eqref{Stokes-surface-1} and solve the parabolic problem
\begin{equation}
\label{Parabolic-surface}
\begin{aligned}
\partial_t v+\omega v - \mu_s \Delta_\Sigma v&=f &&\text{on}\;\;\Sigma \\
                      v(0) &= u_0  &&\text{on}\;\; \Sigma,
\end{aligned}
\end{equation}
by \cite[Theorem 6.4.3 (ii)]{PrSi16} to obtain a unique solution $v\in \mathbb{E}_{1,\mu}(\Sigma)$. Next, we solve $\Delta_{\Sigma}^L\Phi={\rm div}_{\Sigma} v-g$ in $\dot{
H}_q^{-1}(\Sigma)$ by \cite[Theorem 6.4.3 (i)]{PrSi16} to obtain a solution $\Phi$ such that $\nabla_\Sigma\Phi$ is unique with regularity
$$\nabla_\Sigma\Phi\in{_0}H_{p,\mu}^1(\R_+;L_q(\Sigma,\mathsf{T}\Sigma))\cap L_{p,\mu}(\R_+;H_q^2(\Sigma,\mathsf{T}\Sigma)).$$
Note that $\nabla_\Sigma\Phi(0)=0$ by the compatibility condition ${\rm div}_\Sigma u_0=g(0)$.
Define
$$\tilde{u}=u-v+\nabla_\Sigma\Phi\quad\text{and}\quad \tilde{\pi}=\pi-(\partial_t+\omega)\Phi+\psi,$$
where $\nabla_\Sigma\psi\in L_q(\Sigma,\mathsf{T}\Sigma)$ is the unique solution of
$$(\nabla_\Sigma\psi|\nabla_\Sigma\phi)_\Sigma=\mu_s(\Delta_\Sigma\nabla_\Sigma\Phi|\nabla_\Sigma\phi)_\Sigma,\quad \phi\in\dot{H}_{q'}^1(\Sigma).$$
Then $(\tilde{u},\tilde{\pi})$ solves \eqref{Stokes-surface-1} with $({\rm div}_\Sigma f,g,u_0)=0$.

Of course, the converse is also true. If $(\tilde{u},\tilde{\pi})$ solves \eqref{Stokes-surface-1} with $({\rm div}_\Sigma f,g,u_0)=0$, then one may construct a solution $(u,\pi)$ of \eqref{Stokes-surface-1} with prescribed data $(f,g,u_0)$ being subject to the conditions in Theorem \ref{Lin-Theorem-1}, by reversing the above procedure.

\subsection{Localization }

In this subsection we prove the existence and uniqueness of a solution to \eqref{Stokes-surface-1}. We start with the proof of uniqueness. To this end, let $(u,\pi)$ be a solution of \eqref{Stokes-surface-1} with $({\rm div}_\Sigma f,g,u_0)=0$.

By compactness of $\Sigma$, there exists a family of charts $\{(U_k,\varphi_k):k\in\{1,\ldots,N\}\}$ such that $\{U_k\}_{k=1}^N$ is an open covering of $\Sigma$. Let $\{\psi_k\}_{k=1}^N\subset C^\infty(\Sigma)$ be a partition of unity subordinate to the open covering $\{U_k\}_{k=1}^N$. Note that without loss of generality, we may assume that $\varphi_k(U_k)=B_{\mathbb{R}^d}(0,r)$. We call $\{(U_k,\varphi_k,\psi_k):k\in\{1,\ldots,N\}\}$ a \emph{localization system} for $\Sigma$.

Let $\{\tau_{(k)j}(p)\}_{j=1}^d$ denote a local basis of the tangent space $\mathsf{T}_p\Sigma$ of $\Sigma$ at $p\in U_k$ and denote by $\{\tau_{(k)}^j(p)\}_{j=1}^d$ the corresponding dual basis of the cotangent space ${\sf T}^*_p\Sigma$ at $p\in U_k$. Accordingly, we define $g_{(k)}^{ij}=(\tau_{(k)}^i|\tau_{(k)}^j)$ and $g_{(k)ij}$ is defined in a very similar way, see also the Appendix. Then, with $\bar{u}=u\circ\varphi_k^{-1}$, $\bar{\pi}=\pi\circ\varphi_k^{-1}$ and so on, the system \eqref{Stokes-surface-1} with respect to the local charts $(U_k,\varphi_k)$, $k\in\{1,\ldots,N\}$, reads as follows.
\begin{equation}
\label{Stokes-local-coordinates}
\begin{aligned}
\partial_t \bar{u}_{(k)}^\ell +\omega \bar{u}_{(k)}^\ell- \mu_s \bar{g}_{(k)}^{ij}\partial_i\partial_j\bar{u}_{(k)}^\ell + \bar{g}_{(k)}^{i\ell}\partial_i\bar{\pi}_{(k)}&=\bar{f}_{(k)}^\ell+F_{(k)}^\ell(\bar{u},\bar{\pi}) &&\text{in}\;\;\mathbb{R}^d \\
\partial_i \bar{u}_{(k)}^i &=H_{(k)}(\bar{u}) &&\text{in} \;\; \mathbb{R}^d \\
                       \bar{u}_{(k)}^\ell(0) &= 0  &&\text{in}\;\; \mathbb{R}^d,
\end{aligned}
\end{equation}
where
\begin{equation}\label{Def-Functions}
\bar{u}_{(k)}^\ell=(\bar{u}\bar{\psi}_k|\bar{\tau}_{(k)}^\ell),\ \bar{\pi}_{(k)}=\bar{\pi}\bar{\psi}_k,
\end{equation}
$$\bar{f}_{(k)}^\ell=(\bar{f}\bar{\psi}_k|\bar{\tau}_{(k)}^\ell),\quad F_{(k)}^\ell(\bar{u},\bar{\pi})=\bar{\pi} \bar{g}_{(k)}^{i\ell}\partial_i\bar{\psi}_k+(B_{(k)}\bar{u}|\bar{\tau}_{(k)}^\ell),$$
$\ell\in\{1,\ldots,d\}$, $B_{(k)}$ is a linear differential operator of order one and
$$H_{(k)}(\bar{u})=\bar{u}^i\partial_i\bar{\psi}_k-\bar{u}_{(k)}^j (\bar{\tau}_{(k)}^i|\partial_i\bar{\tau}_{(k)j}).$$
Here, upon translation and rotation, $\bar{g}_{(k)}^{ij}(0)=\delta_j^i$ and the coefficients have been extended in such a way that $|\bar{g}_{(k)}^{ij}-\delta_j^i|_{L_\infty(\mathbb{R}^d)}\le\eta$, where $\eta>0$ can be made as small as we wish, by decreasing the radius $r>0$ of the ball $B_{\mathbb{R}^d}(0,r)$.

In order to handle system \eqref{Stokes-local-coordinates}, we define vectors in $\R^d$ as follows:
$$\bar{u}_{(k)}:=(\bar{u}_{(k)}^1,\ldots,\bar{u}_{(k)}^d),\quad \bar{f}_{(k)}:=(\bar{f}_{(k)}^1,\ldots,\bar{f}_{(k)}^d)$$
and
$${F}_{(k)}(\bar{u},\bar{\pi}):=({F}_{(k)}^1(\bar{u},\bar{\pi}),\ldots,{F}_{(k)}^d(\bar{u},\bar{\pi})).$$
Moreover, we define the matrix $G_{(k)}=(\bar{g}_{(k)}^{ij})_{i,j=1}^d\in\R^{d\times d}$. With these notations, system \eqref{Stokes-local-coordinates} reads as
\begin{equation}
\label{Stokes-local-coordinates1}
\begin{aligned}
\partial_t \bar{u}_{(k)} +\omega \bar{u}_{(k)}- \mu_s (G_{(k)}\nabla|\nabla)\bar{u}_{(k)} + G_{(k)}\nabla\bar{\pi}_{(k)}&=\bar{f}_{(k)}+F_{(k)}(\bar{u},\bar{\pi}) &&\text{in}\;\;\mathbb{R}^d \\
{\rm div}\ \bar{u}_{(k)} &=H_{(k)}(\bar{u}) &&\text{in} \;\; \mathbb{R}^d \\
                       \bar{u}_{(k)}(0) &= 0  &&\text{in}\;\; \mathbb{R}^d.
\end{aligned}
\end{equation}
For each $k\in\{1,\ldots,N\}$, we define operators $L_{k,\omega}$ by the first two lines on the left side of \eqref{Stokes-local-coordinates1}. Then each operator is invertible and bounded. This can be seen by first freezing the coefficients at $x=0$, leading to full-space Stokes problems, which enjoy the property of $L_p$-$L_q$-maximal regularity by \cite[Theorem 7.1.1]{PrSi16}. Secondly, a Neumann series argument yields the claim, since $G_{(k)}$ is a perturbation of the identity in $\R^{d\times d}$. With the operator $L_{k,\omega}$ at hand, we may rewrite \eqref{Stokes-local-coordinates1} in the more condensed form
$$L_{k,\omega}(\bar{u}_{(k)},\bar{\pi}_{(k)})=(F_{(k)},H_{(k)})+(\bar{f}_{(k)},0),$$
Next, we remove the term $H_{(k)}$, since it is not of lower order. For that purpose, solve the equation ${\rm div}( G_{(k)}\nabla \phi_k)=H_{(k)}(\bar{u})$. Since $\int_{\R^d}H_{(k)}(\bar{u})dx=0$ ($H_{(k)}(\bar{u})$ is compactly supported), there exists a solution $\phi_k$ such that $\nabla \phi_k$ is unique, with regularity
$$\nabla\phi_k\in{_0}H_{p,\mu}^1(\R_+;H_q^1(\R^d)^d)\cap L_{p,\mu}(\R_+;H_q^3(\R^d)^d).$$
Moreover, we have the estimates
\begin{align}
\begin{split}\label{Est-phi}
|\nabla\phi_k|_{L_{p,\mu}(H_q^1(\R^d))}&\le C|\bar{u}|_{\mathbb{E}_{0,\mu}(\R^d)}\\
|\nabla\phi_k|_{\mathbb{E}_{1,\mu}(\R^d)}+|\nabla^2\phi_k|_{\mathbb{E}_{1,\mu}(\R^d)}&\le C|\bar{u}|_{\mathbb{E}_{1,\mu}(\R^d)}\\
|\nabla\phi_k|_{L_{p,\mu}(H_q^2(\R^d))}&\le C\omega^{-1/2}(\omega|\bar{u}|_{\mathbb{E}_{0,\mu}(\R^d)}+|\bar{u}|_{\mathbb{E}_{1,\mu}(\R^d)}),
\end{split}
\end{align}
see also \cite[(7.41)]{PrSi16}. Define
$$\tilde{u}_{(k)}=\bar{u}_{(k)}-G_{(k)}\nabla\phi_k-{u}_{(k)}^0\quad\text{and}\quad\tilde{\pi}_{(k)}=\bar{\pi}_{(k)}+(\partial_t+\omega)\phi_k-\Phi_k-{\pi}_{(k)}^0,$$
where $({u}_{(k)}^0,{\pi}_{(k)}^0)=L_{k,\omega}^{-1}(\bar{f}_{(k)},0)$ and $\Phi_k$ satisfies ${\rm div}( G_{(k)}\nabla \Phi_k)={\rm div}\tilde{F}_{(k)}(\bar{u},\bar{\pi})$ in $\dot{H}_{q}^{-1}(\R^d)$, with
$$\tilde{F}_{(k)}(\bar{u},\bar{\pi}):=F_{(k)}(\bar{u},\bar{\pi})+\mu_s (G_{(k)}\nabla|\nabla)(G_{(k)}\nabla\phi_k).$$
The couple $(\tilde{u}_{(k)},\tilde{\pi}_{(k)})$ then solves the equation
$$L_{k,\omega}(\tilde{u}_{(k)},\tilde{\pi}_{(k)})=(\tilde{F}_{(k)}(\bar{u},\bar{\pi})-G_{(k)}\nabla\Phi_k,0).$$
We note on the go that ${\rm div}(\tilde{F}_{(k)}(\bar{u},\bar{\pi})-G_{(k)}\nabla\Phi_k)=0$ and that the pressure $\tilde{\pi}_{(k)}$ enjoys additional time regularity. This can be seen exactly as in the proof of \cite[Proposition 7.3.5 (ii)]{PrSi16} with an obvious modification concerning the matrix $G_{(k)}$. In particular, there exists a constant $C>0$, such that
\begin{equation}\label{equation:ptilde}
|P_0\tilde{\pi}_{(k)}|_{L_{p,\mu}(L_q(B_{\R^d}(0,r)))}\le C|\tilde{u}_{(k)}|_{L_{p,\mu}(H_q^1(\R^d))}.
\end{equation}
Let us now introduce a norm for the solution, taking the parameter $\omega$ into account. Set
$$\|(\bar{u}_{(k)},\bar{\pi}_{(k)})\|_\omega=\omega |\bar{u}_{(k)}|_{\mathbb{E}_{0,\mu}(\R^d)}+|\bar{u}_{(k)}|_{\mathbb{E}_{1,\mu}(\R^d)}+|\nabla\bar{\pi}_{(k)}|_{\mathbb{E}_{0,\mu}(\R^d)},$$
and similarly for $\|(u,\pi)\|_\omega$ on $\Sigma$.

For each $k\in\{1,\ldots,N\}$ there exists $\omega_0>0$ such that the operator $L_{k,\omega}$ has the property of $L_p$-$L_q$-maximal regularity, provided $\omega>\omega_0$. In particular, there exists a constant $C>0$ such that
$$\|({u}_{(k)}^0,{\pi}_{(k)}^0)\|_\omega\le C|\bar{f}_{(k)}|_{\mathbb{E}_{0,\mu}(\R^d)}\le C|f|_{\mathbb{E}_{0,\mu}(\Sigma)}$$
and
$$
\|(\tilde{u}_{(k)},\tilde{\pi}_{(k)})\|_\omega\le C\omega^{-\gamma}\|(u,\pi)\|_\omega,
$$
where for the last inequality, we made use of Proposition \ref{Proposition-Reg-Pressure}, implying the estimate
$$\|\bar{\pi}G_{(k)}\nabla\bar{\psi}_k\|_{\mathbb{E}_{0,\mu}(\R^d)}\le C\omega^{-\gamma}(\omega|u|_{\mathbb{E}_{0,\mu}(\Sigma)}+|u|_{\mathbb{E}_{1,\mu}(\Sigma)}),$$
for some constant $\gamma>0$, by interpolation between 
$$L_{p,\mu}(\R_+;L_q(\Sigma))\quad\text{and}\quad  L_{p,\mu}(\R_+;H_q^2(\Sigma)).$$
In the same way, making also use of \eqref{Est-phi} and the definition of $\Phi_k$, we obtain
$$|\nabla\Phi_k|_{\mathbb{E}_{0,\mu}(\R^d)}\le C\omega^{-\gamma}\|(u,\pi)\|_\omega.$$
Furthermore, we have
\begin{align*}
|(\partial_t+\omega)\phi_k|_{\mathbb{E}_{0,\mu}(B_{\mathbb{R}^d}(0,r))}&\le |\tilde{\pi}_{(k)}|_{\mathbb{E}_{0,\mu}(B_{\mathbb{R}^d}(0,r))}+|\bar{\pi}_{(k)}|_{\mathbb{E}_{0,\mu}(B_{\mathbb{R}^d}(0,r))}\\
&\quad+|\pi_{(k)}^0+\Phi_k|_{\mathbb{E}_{0,\mu}(B_{\mathbb{R}^d}(0,r))}\\
&\le |\tilde{\pi}_{(k)}|_{\mathbb{E}_{0,\mu}(B_{\mathbb{R}^d}(0,r))}+|\bar{\pi}_{(k)}|_{\mathbb{E}_{0,\mu}(B_{\mathbb{R}^d}(0,r))}\\
&\quad+C(|\nabla\pi_{(k)}^0|_{\mathbb{E}_{0,\mu}(B_{\mathbb{R}^d}(0,r))}+|\nabla\Phi_{k}|_{\mathbb{E}_{0,\mu}(B_{\mathbb{R}^d}(0,r))})\\
&\le C(|f|_{\mathbb{E}_{0,\mu}(\Sigma)}+\omega^{-\gamma}\|(u,\pi)\|_\omega),
\end{align*}
by \eqref{equation:ptilde}, Proposition \ref{Proposition-Reg-Pressure} for $\bar{\pi}$ and the Poincar\'{e} inequality for $\pi_{(k)}^0$ and $\Phi_k$, since we may assume without loss of generality, that $\pi_{(k)}^0$ as well as $\Phi_k$ have mean value zero on $B_{\mathbb{R}^d}(0,r)$. By interpolation with \eqref{Est-phi} this yields
$$|(\partial_t+\omega)\nabla\phi_k|_{\mathbb{E}_{0,\mu}(B_{\mathbb{R}^d}(0,r))}\le C(|f|_{\mathbb{E}_{0,\mu}(\Sigma)}+\omega^{-\gamma/2}\|(u,\pi)\|_\omega).$$
In conclusion, we obtain the estimate
$$\|\bar{\chi}_k(\bar{u}_{(k)},\bar{\pi}_{(k)})\|_\omega\le C(|f|_{\mathbb{E}_{0,\mu}(\Sigma)}+\omega^{-\gamma}\|(u,\pi)\|_\omega)$$
valid for each $k\in\{1,\ldots,N\}$ and $C>0$ does not depend on $\omega>0$. Here, $\{\chi_k\}_{k=1}^N\subset C^\infty(\Sigma)$ such that $\chi_k=1$ on ${\rm supp}(\psi_k)$ and ${\rm supp}(\chi_k)\subset U_k$. As usual, we have set $\bar{\chi}_k=\chi_k\circ\varphi^{-1}_k$.

For the components $\bar{u}_{(k)}^\ell\circ\varphi_k$ of the vector $\bar{u}_{(k)}\circ\varphi_k\in\R^d$ we derive from \eqref{Def-Functions}
$$\bar{u}_{(k)}^\ell\circ\varphi_k=(u\psi_k|\tau_{(k)}^\ell)=(u^j\tau_{(k)j}\psi_k|\tau_{(k)}^\ell)=u^\ell\psi_k,$$
hence
$$u=\sum_{k=1}^N\psi_k u=\sum_{k=1}^N\psi_k u^\ell\tau_{(k)\ell}=\sum_{k=1}^N(\bar{u}_{(k)}^\ell\bar{\tau}_{(k)\ell})\circ\varphi_k.$$
Since $\chi_k=1$ on ${\rm supp}(\psi_k)$, this finally yields the estimate
$$\|(u,\pi)\|_\omega\le C(|f|_{\mathbb{E}_{0,\mu}(\Sigma)}+\omega^{-\gamma}\|(u,\pi)\|_\omega)$$
valid for all $\omega>\omega_0$. Choosing $\omega_0>0$ sufficiently large, we conclude
$$\|(u,\pi)\|_\omega\le C|f|_{\mathbb{E}_{0,\mu}(\Sigma)}.$$
This in turn implies uniqueness of a solution to \eqref{Stokes-surface-1}.

It remains to prove the existence of a solution to \eqref{Stokes-surface-1}. To this end, we may assume that $({\rm div}_\Sigma f,g,u_0)=0$. Solve the parabolic problem
\begin{equation}
\label{Parabolic-surface-1}
\begin{aligned}
\partial_t v+\omega v - \mu_s \Delta_\Sigma v&=f &&\text{on}\;\;\Sigma \\
                      v(0) &= 0  &&\text{on}\;\; \Sigma,
\end{aligned}
\end{equation}
by \cite[Theorem 6.4.3 (ii)]{PrSi16} to obtain a unique solution $v\in {_0}\mathbb{E}_{1,\mu}(\Sigma)$. Next, we solve $\Delta_{\Sigma}^L\phi={\rm div}_{\Sigma} v$ by \cite[Theorem 6.4.3 (i)]{PrSi16} to obtain a solution $\phi$ such that $\nabla_\Sigma\phi$ is unique with regularity
$$\nabla_\Sigma\phi\in{_0}H_{p,\mu}^1(\R_+;L_q(\Sigma,\mathsf{T}\Sigma))\cap L_{p,\mu}(\R_+;H_q^2(\Sigma,\mathsf{T}\Sigma)).$$
Define $\tilde{u}=v-\nabla_\Sigma\phi$ and $\tilde{\pi}=(\partial_t+\omega)\phi$. It follows that
$$L_\omega(\tilde{u},\tilde{\pi})=f+\mu_s\Delta_\Sigma\nabla_\Sigma\phi,$$
where
$$L_\omega:\{u\in{_0}\mathbb{E}_{1,\mu}(\Sigma):{\rm div}_\Sigma u=0\}\times L_{p,\mu}(\R_+;\dot{H}_q^1(\Sigma))\to \mathbb{E}_{0,\mu}(\Sigma)$$
is defined by
$$L_\omega(u,\pi)=
  \partial_t u+\omega u - \mu_s \Delta_\Sigma u + \nabla_\Sigma \pi.$$
Making use of local coordinates, one can show that
$$\Delta_\Sigma\nabla_\Sigma\phi=\nabla_\Sigma\Delta_{\Sigma}^L\phi+\mathcal{A}_\Sigma\phi,$$
where $\mathcal{A}_\Sigma$ is a second order operator. Setting $\hat{u}=\tilde{u}$, $\hat{\pi}=\tilde{\pi}-\mu_s\Delta_{\Sigma}^L\phi$ and $Sf:=(\hat{u},\hat{\pi})$, we obtain
$$L_\omega Sf=L_\omega(\hat{u},\hat{\pi})=f+Rf,$$
with $Rf:=\mu_s\mathcal{A}_\Sigma\phi$. Since $\mathcal{A}_\Sigma$ is of second order, this yields
\begin{multline*}
|Rf|_{\mathbb{E}_{0,\mu}(\Sigma)}=|\mu_s\mathcal{A}_\Sigma\phi|_{\mathbb{E}_{0,\mu}(\Sigma)}\le C|\phi|_{L_{p,\mu}(\R_+;H_q^2(\Sigma))}=\\=|(\Delta_\Sigma^L)^{-1}\Delta_\Sigma^L\phi|_{L_{p,\mu}(\R_+;H_q^2(\Sigma))}\le
C|{\rm div}_\Sigma v|_{\mathbb{E}_{0,\mu}(\Sigma)}\le \omega^{-1/2}C|f|_{\mathbb{E}_{0,\mu}(\Sigma)},
\end{multline*}
where the constant $C>0$ does not depend on $\omega$. We note that $(\Delta_\Sigma^L)^{-1}$, acting on functions with average zero, is well defined.
A Neumann series argument implies that $(I+R)$ is invertible provided $\omega>0$ is sufficiently large. Hence the operator $S(I+R)^{-1}$ is a right inverse for $L_\omega$, which means that $L_\omega$ is surjective. This completes the proof of Theorem \ref{Lin-Theorem-1}.

\subsection{The surface Stokes operator}\label{section:Surface-Stokes}

By Proposition \ref{pro:D-Sigma},  $\mathcal{P}_\Sigma{\rm div}_\Sigma\mathcal{D}_\Sigma(u)$ is a lower perturbation of $\Delta_\Sigma u$ if ${\rm div}_\Sigma u$ is prescribed. This implies the following result for the system
\begin{equation}
\label{Stokes-surface-full}
\begin{aligned}
\partial_t u+\omega u - 2\mu_s \mathcal{P}_\Sigma{\rm div}_\Sigma\mathcal{D}_\Sigma(u) + \nabla_\Sigma \pi&=f &&\text{on}\;\;\Sigma \\
{\rm div}_\Sigma u &=g &&\text{on} \;\; \Sigma \\
                       u(0) &= u_0  &&\text{on}\;\; \Sigma.
\end{aligned}
\end{equation}
\begin{corollary}\label{Cor-Stokes}
Under the assumptions in Theorem \ref{Lin-Theorem-1}, there exists $\omega_0>0$ such that for each $\omega>\omega_0$, problem \eqref{Stokes-surface-full} admits a unique solution
$$u\in H_{p,\mu}^1(\R_+;L_q(\Sigma,\mathsf{T}\Sigma))\cap L_{p,\mu}(\R_+;H_q^2(\Sigma,\mathsf{T}\Sigma)),\quad \pi\in L_{p,\mu}(\R_+;\dot{H}_q^1(\Sigma)),$$
if and only if the data $(f,g,u_0)$ are subject to the conditions (1)-(4) in Theorem \ref{Lin-Theorem-1}. Moreover, the solution $(u,\pi)$ depends continuously on the given data $(f,g,u_0)$ in the corresponding spaces.
\end{corollary}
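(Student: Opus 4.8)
The plan is to deduce Corollary~\ref{Cor-Stokes} from Theorem~\ref{Lin-Theorem-1} by a perturbation argument, exploiting the identity established in \eqref{NS-surface-2}, namely $2\mu_s\cP_\Sigma{\rm div}_\Sigma\cD_\Sigma(u)=\mu_s\Delta_\Sigma u+\mu_s(\kappa_\Sigma L_\Sigma-L_\Sigma^2)u$ for divergence-free $u$, together with Proposition~\ref{pro:D-Sigma}. More precisely, for general (not necessarily divergence-free) $u$ one has $2\mu_s\cP_\Sigma{\rm div}_\Sigma\cD_\Sigma(u)=\mu_s\Delta_\Sigma u+\mu_s(\kappa_\Sigma L_\Sigma-L_\Sigma^2)u+\mu_s\nabla_\Sigma({\rm div}_\Sigma u)$ (again via Proposition~\ref{pro:D-Sigma}), so that \eqref{Stokes-surface-full} can be rewritten as
\begin{equation*}
\partial_t u+\omega u-\mu_s\Delta_\Sigma u+\nabla_\Sigma\tilde\pi=f+\mu_s(\kappa_\Sigma L_\Sigma-L_\Sigma^2)u,\qquad {\rm div}_\Sigma u=g,
\end{equation*}
with $\tilde\pi:=\pi-\mu_s g$ and $u(0)=u_0$. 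The term $\mu_s(\kappa_\Sigma L_\Sigma-L_\Sigma^2)u$ is a bounded, zeroth-order (hence lower-order) perturbation because $\Sigma$ is smooth and compact, so $\kappa_\Sigma$ and $L_\Sigma$ are bounded smooth tensor fields. The substitution $\tilde\pi\mapsto\pi$ is legitimate since $g\in L_{p,\mu}(\R_+;H_q^1(\Sigma))$ implies $\nabla_\Sigma g\in\mathbb{E}_{0,\mu}(\Sigma)$, and it does not affect the regularity class of the pressure nor the compatibility conditions (1)--(4).

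The key steps, in order, would be: (i) verify the algebraic identity above by invoking Proposition~\ref{pro:D-Sigma}, reducing \eqref{Stokes-surface-full} to the form \eqref{Stokes-surface-1} with right-hand side $f+B u$ where $Bu:=\mu_s(\kappa_\Sigma L_\Sigma-L_\Sigma^2)u$ and modified pressure $\tilde\pi$; (ii) observe that $B\in\mathcal{B}(\mathbb{E}_{1,\mu}(\Sigma),\mathbb{E}_{0,\mu}(\Sigma))$ with a norm bound depending only on $\Sigma$ and $\mu_s$, and in fact $B$ maps into $L_{p,\mu}(\R_+;H_q^1(\Sigma,\mathsf{T}\Sigma))$ boundedly, so that interpolation gives $|Bu|_{\mathbb{E}_{0,\mu}(\Sigma)}\le C\omega^{-1/2}(\omega|u|_{\mathbb{E}_{0,\mu}(\Sigma)}+|u|_{\mathbb{E}_{1,\mu}(\Sigma)})$ — more carefully, since $B$ is merely zeroth-order this last estimate follows from $|u|_{\mathbb{E}_{0,\mu}(\Sigma)}\le\omega^{-1}(\omega|u|_{\mathbb{E}_{0,\mu}(\Sigma)})$, giving a genuinely small factor; (iii) define the solution operator $\mathcal{L}_\omega^{-1}$ for \eqref{Stokes-surface-1} furnished by Theorem~\ref{Lin-Theorem-1} (with the $\omega$-dependent norm $\|\cdot\|_\omega$ introduced in the Localization subsection) and recast \eqref{Stokes-surface-full} as a fixed-point equation $(u,\pi)=\mathcal{L}_\omega^{-1}(f+Bu,g,u_0)$; (iv) run a Neumann series / contraction argument: by Theorem~\ref{Lin-Theorem-1} the map $(u,\pi)\mapsto\mathcal{L}_\omega^{-1}(Bu,0,0)$ has operator norm $\le C\omega^{-1/2}$ in the $\|\cdot\|_\omega$-norm uniformly in $\omega>\omega_0$, so enlarging $\omega_0$ makes it a strict contraction, whence $I-\mathcal{L}_\omega^{-1}B$ is boundedly invertible; (v) conclude existence, uniqueness, and continuous dependence, then undo the pressure substitution.

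The main obstacle — really the only non-routine point — is bookkeeping the dependence on $\omega$ so that the perturbation term is genuinely small. One must check that the constant $C$ in Theorem~\ref{Lin-Theorem-1}'s a priori estimate is uniform for $\omega>\omega_0$ in the weighted norm $\|(u,\pi)\|_\omega=\omega|u|_{\mathbb{E}_{0,\mu}}+|u|_{\mathbb{E}_{1,\mu}}+|\nabla_\Sigma\pi|_{\mathbb{E}_{0,\mu}}$, which is exactly the norm for which the Localization argument was carried out; with that in hand, $|Bu|_{\mathbb{E}_{0,\mu}}\le C\mu_s|u|_{\mathbb{E}_{0,\mu}}\le C\mu_s\omega^{-1}\|(u,\pi)\|_\omega$ and the Neumann series converges for $\omega$ large. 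The compatibility conditions (1)--(4) are inherited verbatim since $B$ does not touch the data $(g,u_0)$ and $\nabla_\Sigma g\in\mathbb{E}_{0,\mu}(\Sigma)$ by condition (2); necessity of (1)--(4) for \eqref{Stokes-surface-full} is then immediate from their necessity for \eqref{Stokes-surface-1} via the same equivalence. This is precisely the content of the sentence preceding the corollary, that $\cP_\Sigma{\rm div}_\Sigma\cD_\Sigma(u)$ is a lower-order perturbation of $\Delta_\Sigma u$ once ${\rm div}_\Sigma u$ is prescribed.
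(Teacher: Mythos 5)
Your proposal is correct and follows essentially the same route as the paper: both rewrite $2\mu_s\cP_\Sigma{\rm div}_\Sigma\cD_\Sigma(u)$ via Proposition~\ref{pro:D-Sigma}(a) as $\mu_s\Delta_\Sigma u$ plus the zeroth-order term $\mu_s(\kappa_\Sigma L_\Sigma-L_\Sigma^2)u$, bound that term by $C\omega^{-1}\|(u,\pi)\|_\omega$, and close with the $\omega$-uniform maximal regularity estimate from Theorem~\ref{Lin-Theorem-1} and a Neumann series. The only (cosmetic) difference is that the paper first reduces to $({\rm div}_\Sigma f,g,u_0)=0$ so that $\nabla_\Sigma{\rm div}_\Sigma u$ vanishes, whereas you absorb $\mu_s\nabla_\Sigma g$ into the pressure via $\tilde\pi=\pi-\mu_s g$; both are legitimate.
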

\begin{proof}
Without loss of generality, we may assume $({\rm div}_\Sigma f,g,u_0)=0$. With the operator $L_\omega$ defined above, we rewrite $\eqref{Stokes-surface-full}$ as
$$(u,\pi)=L_\omega^{-1}f+\mu_sL_\omega^{-1}(\kappa_\Sigma L_\Sigma-L_\Sigma^2)u.$$
For the term of order zero on the right hand side, we have the estimate
$$|(\kappa_\Sigma L_\Sigma-L_\Sigma^2)u|_{\mathbb{E}_{0,\mu}(\Sigma)}\le \omega^{-1}C\|(u,\pi)\|_\omega,$$
where $C>0$ does not depend on $\omega>0$. By Theorem \ref{Lin-Theorem-1} the solution depends continuously on the data, hence there exists a constant $M=M(\omega_0)>0$ such that
$$\|L_\omega^{-1}(\kappa_\Sigma L_\Sigma-L_\Sigma^2)u\|_\omega\le M|(\kappa_\Sigma L_\Sigma-L_\Sigma^2)u|_{\mathbb{E}_{0,\mu}(\Sigma)}\le \omega^{-1}MC\|(u,\pi)\|_\omega.$$
Therefore, a Neumann series argument yields the claim, if $\omega>0$ is chosen sufficiently large.
\end{proof}

We will now define the Stokes operator on surfaces. Let $P_{H,\Sigma}$ denote the \emph{surface Helmholtz projection}, defined by
$$P_{H,\Sigma} v:=v-\nabla_\Sigma\psi,\quad v\in L_q(\Sigma,\mathsf{T}\Sigma),$$
where $\nabla_\Sigma\psi\in L_q(\Sigma,\mathsf{T}\Sigma)$ is the unique solution of
$$(\nabla_\Sigma\psi|\nabla_\Sigma\phi)_\Sigma=(v|\nabla_\Sigma\phi)_\Sigma,\quad \phi\in\dot{H}_{q'}^1(\Sigma).$$
We note that $(P_{H,\Sigma}u|v)_\Sigma=(u|P_{H,\Sigma} v)_\Sigma$ for all $u\in L_q(\Sigma,\mathsf{T}\Sigma)$, $v\in L_{q'}(\Sigma,\mathsf{T}\Sigma)$, which follows directly from the definition of $P_{H,\Sigma}$ (and for smooth functions from the
surface divergence theorem \eqref{divergence-theorem}).
Define 
$$X_0:=L_{q,\sigma}(\Sigma,\mathsf{T}\Sigma):=P_{H,\Sigma}L_q(\Sigma,\mathsf{T}\Sigma)$$
and $X_1:=H_q^2(\Sigma,\mathsf{T}\Sigma)\cap L_{q,\sigma}(\Sigma,\mathsf{T}\Sigma)$.
The \emph{surface Stokes operator} is defined by
\begin{equation}\label{eq:StokesOp}
A_{S,\Sigma}u:=-2 \mu_sP_{H,\Sigma} \mathcal{P}_\Sigma{\rm div}_\Sigma\mathcal{D}_\Sigma(u),\quad u\in D(A_{S,\Sigma}):=X_1.
\end{equation}
We would also like to refer to the survey article~\cite{HiSa18} for the Stokes operator in various other geometric settings.

Making use of the projection $P_{H,\Sigma}$, \eqref{Stokes-surface-full} with $({\rm div}_\Sigma f,g)=0$ is equivalent to the equation
\begin{equation}\label{Abstract-Stokes}
\partial_t u+\omega u+A_{S,\Sigma}u=f,\quad t>0,\quad u(0)=u_0.
\end{equation}
By Corollary \ref{Cor-Stokes}, the operator $A_{S,\Sigma}$ has $L_{p}$-maximal regularity, hence $-A_{S,\Sigma}$ generates an analytic $C_0$-semigroup in $X_0$, see for instance \cite[Proposition 3.5.2]{PrSi16}.

\subsection{Interpolation spaces}\mbox{}
In this subsection, we will determine
the real and complex interpolation spaces $(X_0,X_1)_{\alpha,p}$ and $(X_0,X_1)_\alpha$, respectively. To this end, let $A_\Sigma u:=-2\mu_s\cP_\Sigma{\rm div}_\Sigma\cD_\Sigma(u)$ with domain $D(A_\Sigma):=H_q^2(\Sigma,\mathsf{T}\Sigma)$ and
define a linear mapping $Q$ on $D(A_\Sigma)$ by
$$Q=(\omega+A_{S,\Sigma})^{-1}P_{H,\Sigma} (\omega+A_\Sigma),$$
for some fixed and sufficiently large $\omega>0$.

Then $Q:D(A_\Sigma)\to X_1$ is a bounded projection,  as $Qu\in X_1$ and 
$$Q^2u=(\omega+A_{S,\Sigma})^{-1}P_{H,\Sigma} (\omega+A_\Sigma)Qu=(\omega+A_{S,\Sigma})^{-1}(\omega+A_{S,\Sigma})Qu=Qu,$$
for all $u\in D(A_\Sigma)$. Furthermore, $Q|_{X_1}=I_{X_1}$ and therefore $Q:D(A_\Sigma)\to X_1$ is surjective. By a duality argument, there exists some constant $C>0$ such that
\begin{equation}\label{eq:Q}
\|Qu\|_{L_q(\Sigma)}\le C\|u\|_{L_q(\Sigma)}
\end{equation}
for all $u\in D(A_\Sigma)$. In fact,
\begin{align*}
(Qu|\phi)_{\Sigma}&=((\omega+A_{S,\Sigma})^{-1}P_{H,\Sigma} (\omega+A_\Sigma)u|\phi)_\Sigma\\
&=(P_{H,\Sigma}(\omega+A_{S,\Sigma})^{-1}P_{H,\Sigma} (\omega+A_\Sigma)u|\phi)_\Sigma\\
&=(P_{H,\Sigma}(\omega+A_\Sigma)u| (\omega+A_{S,\Sigma})^{-1}P_{H,\Sigma}\phi)_{\Sigma}\\
&=(u|(\omega+A_\Sigma) (\omega+A_{S,\Sigma})^{-1}P_{H,\Sigma}\phi)_{\Sigma}
\end{align*}
implies
$$|(Qu|\phi)_{\Sigma}|\le C\|u\|_{L_q(\Sigma)}\|\phi\|_{L_{q'}(\Sigma)}$$
for all $u\in D(A_\Sigma)$ and $\phi\in L_{q'}(\Sigma,\mathsf{T}\Sigma)$, with 
$$C:=\|(\omega+A_\Sigma) (\omega+A_{S,\Sigma})^{-1}P_{H,\Sigma}\|_{\mathcal{B}(L_{q'}(\Sigma);L_{q'}(\Sigma))}>0.$$
Since $D(A_\Sigma)$ is dense in $L_q(\Sigma,\mathsf{T}\Sigma)$, there exists a unique bounded extension $\tilde{Q}:L_{q}(\Sigma,\mathsf{T}\Sigma)\to X_0$ of $Q$. Clearly, $\tilde{Q}$ is a projection and as $X_1$ is dense in $X_0$, $\tilde{Q}|_{X_0}=I_{X_0}$.

It follows that
$$L_q(\Sigma,\mathsf{T}\Sigma)=X_0\oplus N(\tilde{Q})\quad\text{and}\quad D(A_\Sigma)=X_1\oplus [D(A_\Sigma)\cap N(\tilde{Q})]$$
since $\tilde{Q}D(A_\Sigma)=D(A_\Sigma)\cap R(\tilde{Q})=D(A_{S,\Sigma})=X_1$.
Moreover, with the help of the projection $\tilde{Q}$ and the relation $R(\tilde{Q})=L_{q,\sigma}(\Sigma,\mathsf{T}\Sigma)$, we may now compute
\begin{multline*}
(X_0,X_1)_\alpha=(\tilde{Q} L_q(\Sigma,\mathsf{T}\Sigma),\tilde{Q}D(A_\Sigma))_\alpha\\
=\tilde{Q}(L_q(\Sigma,\mathsf{T}\Sigma),D(A_\Sigma))_\alpha=H_q^{2\alpha}(\Sigma,\mathsf{T}\Sigma)\cap L_{q,\sigma}(\Sigma,\mathsf{T}\Sigma)
\end{multline*}
as well as
\begin{multline*}
(X_0,X_1)_{\alpha,p}=(\tilde{Q} L_q(\Sigma,\mathsf{T}\Sigma),\tilde{Q}D(A_\Sigma))_{\alpha,p}\\
=\tilde{Q}(L_q(\Sigma,\mathsf{T}\Sigma),D(A_\Sigma))_{\alpha,p}=B_{qp}^{2\alpha}(\Sigma,\mathsf{T}\Sigma)\cap L_{q,\sigma}(\Sigma,\mathsf{T}\Sigma)
\end{multline*}
for $\alpha\in (0,1)$ and $p\in (1,\infty)$, see \cite[Theorem 1.17.1.1]{Tri78}.

\subsection{Nonlinear well-posedness}

We will show that there exists a unique local-in-time solution to \eqref{NS-surface-2}. Observe that the semilinear problem \eqref{NS-surface-2} is equivalent to the abstract semilinear evolution equation
\begin{equation}\label{Abstract-Navier-Stokes}
\partial_t u+A_{S,\Sigma}u=F_\Sigma(u),\quad t>0,\quad u(0)=u_0,
\end{equation}
where $F_\Sigma(u):=-P_{H,\Sigma}\mathcal{P}_\Sigma(u\cdot\nabla_\Sigma u)$. In order to solve this equation in the maximal regularity class $\mathbb{E}_{1,\mu}(\Sigma)$, we will apply Theorem 2.1 in \cite{LPW14}. To this end, let $q\in (1,d)$ and 
\begin{equation}\label{mu-c}
\mu_c:=\frac{1}{2}\left(\frac{d}{q}-1\right)+\frac{1}{p},
\end{equation}
with $2/p+d/q<3$, so that $\mu_c\in (1/p,1)$. We will show that for each $\mu\in (\mu_c,1]$
there exists $\beta\in (\mu-1/p,1)$ with $2\beta-1<\mu-1/p$ such that $F_\Sigma$ satisfies the estimate
\begin{equation}\label{ineq-F-Sigma}
|F_\Sigma(u)-F_\Sigma(v)|_{X_0}\le C(|u|_{X_\beta}+|v|_{X_\beta})||u-v|_{X_\beta}
\end{equation}
for all $u,v\in X_\beta:=(X_0,X_1)_{\beta,p}$. 

By H\"olders inequality, the estimate
$$|F_\Sigma(u)|_{L_q(\Sigma,\mathsf{T}\Sigma)}\le C|u|_{L_{qr'}(\Sigma,\mathsf{T}\Sigma)}|u|_{H_{qr}^1(\Sigma,\mathsf{T}\Sigma)}$$
holds. We choose $r,r'\in (1,\infty)$ in such a way that 
$$1-\frac{d}{qr}=-\frac{d}{qr'}\quad\text{or equivalently}\quad \frac{d}{qr}=\frac{1}{2}\left(1+\frac{d}{q}\right),$$
which is feasible if $q\in (1,d)$. Next, by Sobolev embedding, we have
$$(X_0,X_1)_{\beta,p}\subset B_{qp}^{2\beta}(\Sigma,\mathsf{T}\Sigma)\hookrightarrow H_{qr}^1(\Sigma,\mathsf{T}\Sigma)\cap L_{qr'}(\Sigma,\mathsf{T}\Sigma),$$
provided 
$$2\beta-\frac{d}{q}>1-\frac{d}{qr}\quad\text{or equivalently}\quad
\beta>\frac{1}{4}\left(\frac{d}{q}+1\right).$$
The condition $\beta<1$ requires $q>d/3$, hence $q\in (d/3,d)$. Note that
$$\frac{1}{2}\left(\frac{d}{q}+1\right)-1<\mu-1/p,$$
since $\mu>\mu_c$.
This implies that $1>\beta>(d/q+1)/4$ can be chosen in such a way that the inequalities $2\beta-1<\mu-1/p$ and $\mu-1/p<\beta$ are satisfied.

In case $q\ge d$, we may choose any $\mu\in (1/p,1]$, since 
$$B_{qp}^{2\beta}(\Sigma,\mathsf{T}\Sigma)\hookrightarrow H_{q}^1(\Sigma,\mathsf{T}\Sigma)\cap L_{\infty}(\Sigma,\mathsf{T}\Sigma),$$
provided $2\beta>1$.

Since $F_\Sigma$ is bilinear, it follows that the estimate \eqref{ineq-F-Sigma} holds and, moreover, that $F_\Sigma\in C^\infty(X_\beta,X_0)$. Therefore, Theorem 2.1 in \cite{LPW14} yields the following result.
\begin{theorem}\label{Theorem-LWP}
\mbox{} 
Let $p,q\in (1,\infty)$.  Suppose that one of the following conditions hold:
\begin{itemize}
\item[(a)]
$q\in (d/3,d)$, $2/p+d/q<3$ and $\mu\in (\mu_c,1],$ where $\mu_c$ is defined in \eqref{mu-c}.
\vspace{1mm}
\item[(b)] 
 $q\ge d$ and  $\mu\in (1/p,1]$.
\end{itemize}
Then, for any initial value $u_0\in B_{qp}^{2\mu-2/p}(\Sigma,\mathsf{T}\Sigma)\cap L_{q,\sigma}(\Sigma,\mathsf{T}\Sigma)$, 
there exists a number $a=a(u_0)>0$ such that \eqref{NS-surface-2} admits a unique solution
$$u\in H_{p,\mu}^1((0,a);L_q(\Sigma,\mathsf{T}\Sigma))\cap L_{p,\mu}((0,a);H_q^2(\Sigma,\mathsf{T}\Sigma)),\quad \pi\in L_{p,\mu}((0,a);\dot{H}_{q}^1(\Sigma)).$$
Moreover,
$$u\in C([0,a];B_{qp}^{2\mu-2/p}(\Sigma,\mathsf{T}\Sigma))\cap C((0,a];B_{qp}^{2-2/p}(\Sigma,\mathsf{T}\Sigma)).$$
\end{theorem}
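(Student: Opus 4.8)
The plan is to realize \eqref{NS-surface-2} as the abstract semilinear problem \eqref{Abstract-Navier-Stokes}, namely $\partial_t u + A_{S,\Sigma} u = F_\Sigma(u)$ with $u(0)=u_0$, where $A_{S,\Sigma}$ is the surface Stokes operator and $F_\Sigma(u) = -P_{H,\Sigma}\mathcal P_\Sigma(u\cdot\nabla_\Sigma u)$, and then to invoke the abstract local well-posedness theorem for semilinear problems with maximal $L_{p,\mu}$-regularity (Theorem 2.1 in \cite{LPW14}). First I would record the ingredients that theorem requires: (i) $-A_{S,\Sigma}$ has the property of $L_{p}$-maximal regularity on $X_0 = L_{q,\sigma}(\Sigma,\mathsf T\Sigma)$ — this is Corollary~\ref{Cor-Stokes} together with the discussion around \eqref{Abstract-Stokes} (the shift by $\omega$ is harmless for local existence); (ii) the interpolation identification $X_\beta := (X_0,X_1)_{\beta,p} = B_{qp}^{2\beta}(\Sigma,\mathsf T\Sigma)\cap L_{q,\sigma}(\Sigma,\mathsf T\Sigma)$ from the Interpolation spaces subsection, and likewise the trace space $(X_0,X_1)_{\mu-1/p,p} = B_{qp}^{2\mu-2/p}(\Sigma,\mathsf T\Sigma)\cap L_{q,\sigma}$; and (iii) the nonlinearity estimate \eqref{ineq-F-Sigma}, i.e.\ $F_\Sigma\colon X_\beta \to X_0$ is (bilinearly) locally Lipschitz with the stated quadratic bound, for a suitable $\beta$ satisfying $\mu - 1/p < \beta < 1$ and $2\beta - 1 < \mu - 1/p$.

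The heart of the matter is verifying (iii) in the two parameter regimes (a) and (b). In case $q\in(d/3,d)$ one writes, via H\"older, $|F_\Sigma(u)|_{L_q} \le C\,|u|_{L_{qr'}}\,|u|_{H^1_{qr}}$ with the exponents tuned so that $1 - d/(qr) = -d/(qr')$, i.e.\ $d/(qr) = \tfrac12(1 + d/q)$, which is admissible precisely because $q<d$. Then the Sobolev embedding $B_{qp}^{2\beta}(\Sigma,\mathsf T\Sigma)\hookrightarrow H^1_{qr}(\Sigma,\mathsf T\Sigma)\cap L_{qr'}(\Sigma,\mathsf T\Sigma)$ holds as soon as $2\beta - d/q > 1 - d/(qr)$, equivalently $\beta > \tfrac14(d/q + 1)$; the constraint $\beta<1$ forces $q>d/3$, and the definition of $\mu_c$ in \eqref{mu-c} is exactly what guarantees $\tfrac14(d/q+1) < \mu - 1/p$ whenever $\mu > \mu_c$, so that an admissible window for $\beta$ with $\tfrac14(d/q+1) < \beta$, $\mu - 1/p < \beta$, and $2\beta - 1 < \mu - 1/p$ is nonempty. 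In case $q\ge d$ one instead uses $B_{qp}^{2\beta}\hookrightarrow H^1_q\cap L_\infty$ for $2\beta>1$, which leaves $\mu\in(1/p,1]$ completely free. In either case, since $F_\Sigma$ is bilinear, the difference estimate \eqref{ineq-F-Sigma} and smoothness $F_\Sigma\in C^\infty(X_\beta,X_0)$ follow from the single quadratic bound by polarization.

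With (i)--(iii) in hand, Theorem 2.1 of \cite{LPW14} produces, for every $u_0$ in the trace space $B_{qp}^{2\mu - 2/p}(\Sigma,\mathsf T\Sigma)\cap L_{q,\sigma}(\Sigma,\mathsf T\Sigma)$, a number $a = a(u_0)>0$ and a unique solution $u$ in the maximal-regularity class ${_0}\mathbb E_{1,\mu}((0,a))$ shifted by the initial value, i.e.\ $u\in H^1_{p,\mu}((0,a);L_q(\Sigma,\mathsf T\Sigma))\cap L_{p,\mu}((0,a);H^2_q(\Sigma,\mathsf T\Sigma))$, together with the continuity assertions $u\in C([0,a];B_{qp}^{2\mu-2/p})\cap C((0,a];B_{qp}^{2-2/p})$ coming from the mixed derivative theorem and the instantaneous-smoothing (parabolic regularization) part of that theorem. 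The pressure is then recovered by applying $I - P_{H,\Sigma}$ to the momentum equation: $\nabla_\Sigma\pi$ is determined, with the stated regularity $\pi\in L_{p,\mu}((0,a);\dot H^1_q(\Sigma))$, by solving the weak elliptic problem $(\nabla_\Sigma\pi|\nabla_\Sigma\phi)_\Sigma = (F_\Sigma(u) + 2\mu_s\mathcal P_\Sigma{\rm div}_\Sigma\mathcal D_\Sigma(u) - \partial_t u | \nabla_\Sigma\phi)_\Sigma$ for $\phi\in\dot H^1_{q'}(\Sigma)$, using that $0$ is in the resolvent set of the scalar Laplace--Beltrami operator on functions with zero average, so that $\pi$ is unique up to an additive constant. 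Finally one checks the equivalence of \eqref{NS-surface-2} with \eqref{Abstract-Navier-Stokes}: applying $P_{H,\Sigma}$ to the first line of \eqref{NS-surface-2} removes $\nabla_\Sigma\pi$ and, by Proposition~\ref{pro:D-Sigma}, turns $-\mu_s\Delta_\Sigma u - \mu_s(\kappa_\Sigma L_\Sigma - L_\Sigma^2)u$ into $-2\mu_s P_{H,\Sigma}\mathcal P_\Sigma{\rm div}_\Sigma\mathcal D_\Sigma(u) = A_{S,\Sigma}u$ on divergence-free fields, while conversely a solution of \eqref{Abstract-Navier-Stokes} plus the reconstructed pressure solves \eqref{NS-surface-2}. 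The main obstacle is really the bookkeeping in step (iii): pinning down the exponents $r,r'$, the Sobolev threshold $\beta > \tfrac14(d/q+1)$, and checking that the three inequalities on $\beta$ are simultaneously solvable precisely under the hypothesis $\mu>\mu_c$ (with the borderline count $2/p + d/q < 3$ ensuring $\mu_c\in(1/p,1)$); once the admissible $\beta$-window is secured, everything else is a direct application of cited results.
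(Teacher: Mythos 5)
Your proposal is correct and follows essentially the same route as the paper: reformulation as the abstract semilinear equation \eqref{Abstract-Navier-Stokes}, the H\"older/Sobolev verification of \eqref{ineq-F-Sigma} with the identical exponents $r,r'$ and threshold $\beta>\tfrac14(d/q+1)$ in the two regimes, and an application of Theorem 2.1 of \cite{LPW14}; your explicit recovery of the pressure is a welcome addition that the paper leaves implicit. One small inaccuracy: $\mu>\mu_c$ does \emph{not} give $\tfrac14(d/q+1)<\mu-1/p$ (indeed $\tfrac14(d/q+1)>\tfrac12(d/q-1)=\mu_c-1/p$ for $q>d/3$); what it gives is $\tfrac12(d/q+1)-1<\mu-1/p$, i.e.\ $\tfrac14(d/q+1)<\tfrac12(1+\mu-1/p)$, which together with the three constraints on $\beta$ you correctly list is exactly what makes the admissible $\beta$-window nonempty, so the argument still closes.
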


\begin{remark}\mbox{}
\begin{itemize}
\item[(a)] The number $\mu_c\in (1/p,1]$ defined in \eqref{mu-c} is called the critical weight and was introduced in
\cite{PSW18, PrWi17,PrWi18}.
It has been shown in~\cite{PSW18} that the `critical spaces' $(X_0,X_1)_{\mu_c-1/p,p}$ correspond 
to scaling invariant spaces in case the underlying equations enjoy scaling invariance.
\item[(b)] In future work, we plan to show that $A_{S,\Sigma}$ has a bounded $H^\infty$-calculus. Then one may set $\mu=\mu_c$ in Theorem~\ref{Theorem-LWP}, thereby obtaining well-posedness in critical spaces. 
\item[(c)] In case $d=2$, global existence has been obtained by Taylor \cite[Proposition 6.5]{Tay92}. An alternative proof can be based on the approach via critical spaces mentioned above.

\end{itemize}
\end{remark}

\section{Stability of equilibria, examples}

Consider the semilinear evolution equation
\begin{equation}\label{Abstract-Navier-Stokes2}
\partial_t u+A_{S,\Sigma}u=F_\Sigma(u),\ t>0,\quad u(0)=u_0,
\end{equation}
in $X_0=L_{q,\sigma}(\Sigma,\mathsf{T}\Sigma)$.
Define the set
\begin{equation}\label{u-equilibria}
\cE:=\{u_*\in H_q^2(\Sigma;\mathsf{T}\Sigma):{\rm div}_\Sigma u_*=0,\ \mathcal{D}_\Sigma(u_*)=0\}.
\end{equation}
We show that the set $\cE$ corresponds exactly to the set of equilibria for \eqref{Abstract-Navier-Stokes2}. To this end, let $u_*$ be an equilibrium of \eqref{Abstract-Navier-Stokes2}, i.e.\ $u_*\in X_1$ satisfies $A_{S,\Sigma}u_*=F_\Sigma(u_*)$. Multiplying this equation by $u_*$ and integrating over $\Sigma$ yields
$$0=2\mu_s({\rm div}_\Sigma\mathcal{D}_\Sigma(u_*)|u_*)_\Sigma+(u_*\cdot\nabla_\Sigma u_*|u_*)_\Sigma.$$
By Lemma \ref{lem:app} and the surface divergence theorem \eqref{divergence-theorem}, the last term vanishes, since ${\rm div}_\Sigma u_*=0$.  Furthermore, \eqref{divergence-parts} and again \eqref{divergence-theorem} show that
$$({\rm div}_\Sigma\mathcal{D}_\Sigma(u_*)|u_*)_\Sigma=-\int_\Sigma|\mathcal{D}_\Sigma(u_*)|^2 \,d\Sigma,$$
which implies $\mathcal{D}_\Sigma(u_*)=0$, hence $u_*\in\cE$.

Conversely, let $u_*\in\cE$ be given. Then $A_{S,\Sigma}u_*=0$ and from Lemma \ref{lem:app} we obtain that
$$F_\Sigma(u_*)=-P_{H,\Sigma}\cP_\Sigma(u_*\cdot\nabla_\Sigma u_*)=\frac{1}{2}P_{H,\Sigma}(\nabla_\Sigma|u_*|^2)=0.$$
Summarizing, we have shown that
$$\cE=\{u_*\in H_q^2(\Sigma;\mathsf{T}\Sigma):{\rm div}_\Sigma u_*=0,\ A_{S,\Sigma}u_*=F_\Sigma(u_*)\}.$$
Observe that the set $\cE$ is a linear manifold, consisting exactly of the \emph{Killing fields} on $\Sigma$,  see Remark \ref{rem:app}.

Define an operator $A_0:X_1\to X_0$ by
\begin{equation}\label{eq-full-lin}
A_0v=A_{S,\Sigma}v-F_\Sigma'(u_*)v,
\end{equation}
where $F_\Sigma'(u_*)v:=-P_{H,\Sigma}\mathcal{P}_\Sigma\left(v\cdot\nabla_\Sigma u_*+u_*\cdot\nabla_\Sigma v\right)$.  This operator is the full linearization of \eqref{Abstract-Navier-Stokes2} at the equilibrium $u_*\in\cE$.
We collect some properties of $A_0$ in the following
\begin{proposition}\label{pro:normally-stable}
Suppose $u_*\in\cE$ and let $A_0$ be given by  \eqref{eq-full-lin}. Then $-A_0$ generates a compact analytic $C_0$-semigroup in $X_0$ which has $L_p$-maximal regularity. The spectrum of $A_0$ consists only of eigenvalues of finite algebraic multiplicity and the kernel $N(A_0)$ is given by
$$N(A_0)=\{u\in H_q^2(\Sigma,\mathsf{T}\Sigma):{\rm div}_\Sigma u=0,\ \mathcal{D}_\Sigma(u)=0\}.$$
If $\cE\neq\{0\}$, then $u_*\in\cE$ is normally stable, i.e.
\begin{enumerate}
\item[(i)] ${\rm Re}\ \sigma(-A_0)\le 0$ and $\sigma(A_0)\cap i\R=\{0\}$.
\item[(ii)] $\lambda=0$ is a semi-simple eigenvalue of $A_0$.
\item[(iii)] The kernel $N(A_0)$ is isomorphic to $\mathsf{T}_{u_*}\cE$.
\end{enumerate}
In case $\cE=\{0\}$, it holds that ${\rm Re}\ \sigma(-A_0)<0$.
\end{proposition}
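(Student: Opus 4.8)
The plan is to establish the spectral and semigroup properties of $A_0$ in several stages, following the standard template for proving normal stability of equilibria (as in \cite[Chapter 5]{PrSi16}). First I would address the generation and compactness statement: since $A_{S,\Sigma}$ has $L_p$-maximal regularity by Corollary~\ref{Cor-Stokes} and $F_\Sigma'(u_*)$ is a first-order differential operator, it is a lower-order perturbation relative to $A_{S,\Sigma}$; hence $A_0 = A_{S,\Sigma} - F_\Sigma'(u_*)$ again has $L_p$-maximal regularity and $-A_0$ generates an analytic $C_0$-semigroup on $X_0$. Compactness of the semigroup and discreteness of the spectrum follow because $\Sigma$ is compact, so the embedding $X_1 = H_q^2(\Sigma,\mathsf{T}\Sigma)\cap L_{q,\sigma} \hookrightarrow X_0$ is compact; then $(\lambda + A_0)^{-1}$ is compact for $\lambda$ in the resolvent set, the spectrum consists only of eigenvalues of finite algebraic multiplicity, and the semigroup is immediately compact.

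Next I would identify the kernel $N(A_0)$. The inclusion $\cE \subset N(A_0)$ is essentially already shown in the text preceding the proposition: for $u_* \in \cE$ one has $A_{S,\Sigma}u_* = 0$ and $F_\Sigma(u_*) = 0$, and the same computation with the linearized operator (using $\cD_\Sigma(u_*) = 0$ and Lemma~\ref{lem:app}) shows $A_0 u = 0$ whenever $\cD_\Sigma(u) = 0$ and ${\rm div}_\Sigma u = 0$. For the reverse inclusion, suppose $A_0 u = 0$. Apply the Helmholtz decomposition, take the $L_2(\Sigma)$-inner product with $u$, and exploit the skew-symmetry/divergence-free structure: the term $(P_{H,\Sigma}\cP_\Sigma(u\cdot\nabla_\Sigma u_* + u_*\cdot\nabla_\Sigma u)\,|\,u)_\Sigma$ should vanish because $u_*$ is a Killing field (its flow consists of isometries, so the associated transport term is energy-neutral for divergence-free $u$), while $(A_{S,\Sigma}u\,|\,u)_\Sigma = 2\mu_s\int_\Sigma |\cD_\Sigma(u)|^2\,d\Sigma$ by \eqref{divergence-parts} and \eqref{divergence-theorem}. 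This forces $\cD_\Sigma(u) = 0$, hence $u \in \cE$ by Proposition~\ref{pro:equilibria}(b),(c), giving $N(A_0) = \cE$.

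Then I would prove (i)–(iii). For (i): the same energy identity shows that for any eigenvalue $\lambda$ with eigenfunction $u$, $\lambda |u|_{L_2}^2 = -(A_0 u\,|\,u)_\Sigma = -2\mu_s\int_\Sigma|\cD_\Sigma(u)|^2\,d\Sigma \le 0$ (using that the $F_\Sigma'(u_*)$-term contributes nothing to the real part), so ${\rm Re}\,\sigma(-A_0) \le 0$; moreover ${\rm Re}\,\lambda = 0$ forces $\cD_\Sigma(u) = 0$, hence $\lambda = 0$, which gives $\sigma(A_0)\cap i\R = \{0\}$ when $\cE \neq \{0\}$ and ${\rm Re}\,\sigma(-A_0) < 0$ when $\cE = \{0\}$ (since then $N(A_0) = \{0\}$ rules out $\lambda = 0$). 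For (ii), semi-simplicity of $\lambda = 0$: I need $N(A_0) = N(A_0^2)$, i.e.\ $N(A_0)\cap R(A_0) = \{0\}$. If $A_0 v = u \in N(A_0) = \cE$, then pairing with $u$ gives $(A_0 v\,|\,u)_\Sigma = |u|_{L_2}^2$; but one shows $(A_0 v\,|\,u)_\Sigma = (v\,|\,A_0^* u)_\Sigma$ and, because $u$ is a Killing field, $A_0^* u$ can be related back to $\cD_\Sigma(u) = 0$, forcing $|u|_{L_2}^2 = 0$. (The cleanest route is to note $A_0$ is self-adjoint-like modulo the Killing transport term, which is skew on divergence-free fields, so $0$ is semi-simple by the spectral structure.) Statement (iii) is immediate: $\cE$ is a linear subspace of $X_1$, so $\mathsf{T}_{u_*}\cE \cong \cE = N(A_0)$.

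The main obstacle I anticipate is proving rigorously that the linearized transport term involving the Killing field $u_*$ contributes nothing to the relevant energy identities — both for pinning down $N(A_0)$ and for semi-simplicity of $0$. Concretely, one must show $(\cP_\Sigma(u\cdot\nabla_\Sigma u_* + u_*\cdot\nabla_\Sigma u)\,|\,u)_\Sigma = 0$ for divergence-free tangential $u$; this is where the Killing property $\cD_\Sigma(u_*) = 0$ (equivalently, $\mathcal{L}_{u_*}g = 0$) must be used decisively, likely via an integration-by-parts identity expressing $(u_*\cdot\nabla_\Sigma u\,|\,u)_\Sigma = \tfrac12(u_*\,|\,\nabla_\Sigma|u|^2)_\Sigma = 0$ and $(u\cdot\nabla_\Sigma u_*\,|\,u)_\Sigma = (\cD_\Sigma(u_*)u\,|\,u)_\Sigma - (\text{skew part}) = 0$ using $\cD_\Sigma(u_*) = 0$ and antisymmetry of $\nabla_\Sigma u_* - (\nabla_\Sigma u_*)^{\sf T}$. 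Getting these surface calculus identities exactly right, with the projections $\cP_\Sigma$ in place, is the delicate technical point; everything else is a routine application of the perturbation and compactness machinery together with the energy identity already exploited in Section~\ref{sec:-energy}.
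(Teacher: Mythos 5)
Your proposal is correct and follows essentially the same route as the paper: relative boundedness of $F_\Sigma'(u_*)$ plus compactness of $X_1\hookrightarrow X_0$ for generation and discreteness, the $L_2$-energy identity with the Killing property of $u_*$ rendering the transport terms purely imaginary so that ${\rm Re}\,\lambda\le 0$ with equality forcing $\cD_\Sigma(v)=0$, and semi-simplicity via $N(A_0^2)\subset N(A_0)$ using the symmetric form $2\mu_s\int_\Sigma\cD_\Sigma(w):\cD_\Sigma(v)\,d\Sigma$. The one identity you should make explicit (the paper does) is $\cP_\Sigma(v\cdot\nabla_\Sigma u_*+u_*\cdot\nabla_\Sigma v)=-\nabla_\Sigma(u_*|v)$ when $\cD_\Sigma(v)=\cD_\Sigma(u_*)=0$, which shows $F_\Sigma'(u_*)v$ is a gradient annihilated by $P_{H,\Sigma}$ and thus closes the step from $\cD_\Sigma(v)=0$ to $\lambda=0$.
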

\begin{proof}
By Subsection \ref{section:Surface-Stokes} the surface Stokes operator $A_{S,\Sigma}$ has the property of $L_p$-maximal regularity in $X_0$, hence $-A_{S,\Sigma}$ is $\mathcal{R}$-sectorial in $X_0$. Furthermore, the linear mapping $[v\mapsto F_\Sigma'(u_*)v]$ is relatively bounded with respect to $A_{S,\Sigma}$. An application of \cite[Proposition 4.4.3]{PrSi16} yields that $-A_0$ generates an analytic $C_0$-semigroup in $X_0$ having $L_p$-maximal regularity. Since the domain $X_1$ of $A_0$ is compactly embedded into $X_0$, the spectrum $\sigma(A_0)$ is discrete and consists solely of eigenvalues of $A_0$ having finite algebraic multiplicity. 

We first consider the case $\mathcal{E}\neq\{0\}$. Let $\lambda\in\sigma(-A_0)$ and denote by $v\in X_1$ a corresponding eigenfunction. Multiplying the equation $\lambda v+A_0v=0$ by the complex conjugate $\bar{v}$ and integrating over $\Sigma$ yields
\begin{align}\label{eq:spectralest}
{\rm Re}\lambda|v|_{L_2(\Sigma)}^2&=2\mu_s{\rm Re}(\mathcal{P}_\Sigma{\rm div}_\Sigma\mathcal{D}_\Sigma(v)|\bar{v})_\Sigma-{\rm Re}(\mathcal{P}_\Sigma(v\cdot\nabla_\Sigma u_*)+\mathcal{P}_\Sigma(u_*\cdot\nabla_\Sigma v)|\bar{v})_\Sigma\nn\\
&=-2\mu_s\int_\Sigma |\mathcal{D}_\Sigma(v)|^2\,d\Sigma.
\end{align}
Here, we used the identities
$$(v\cdot\nabla_\Sigma u_*|\bar{v})_\Sigma=-\overline{(v\cdot\nabla_\Sigma u_*|\bar{v})_\Sigma}$$
and
\begin{align*}
(u_*\cdot\nabla_\Sigma v|\bar{v})_\Sigma&=(u_*|\nabla_\Sigma|v|^2)_\Sigma-
(u_*|(\nabla_\Sigma\bar{v})v)_\Sigma\\
&=-\overline{(u_*\cdot\nabla_\Sigma v|\bar{v})_\Sigma}
\end{align*}
since $\mathcal{D}_\Sigma(u_*)=0$ and ${\rm div}_\Sigma u_*=0$, employing the surface divergence theorem \eqref{divergence-theorem}. It follows that ${\rm Re}\lambda\le 0$ and if ${\rm Re}\lambda=0$, then $\mathcal{D}_\Sigma(v)=0$. Observe that the equations $\mathcal{D}_\Sigma(v)=0=\mathcal{D}_\Sigma(u_*)$ then lead to the identity
$$\mathcal{P}_\Sigma(v\cdot\nabla_\Sigma u_*+u_*\cdot\nabla_\Sigma v)=-\nabla_\Sigma(u_*|v),$$
hence $F_\Sigma'(u_*)v=P_{H,\Sigma}(\nabla_\Sigma(u_*|v))=0$ and therefore $A_0v=0$.


The above calculations show that $\sigma(A_0)\cap i\R=\{0\}$ and
$$N(A_0)=\{v\in X_1:\mathcal{D}_\Sigma(v)=0\},$$
wherefore $N(A_0)\cong\cE\cong \mathsf{T}_{u_*}\cE$.

We will now prove that $\lambda=0$ is semi-simple. To this end, it suffices to prove that $N(A_0^2)\subset N(A_0)$. Let $w\in N(A_0^2)$ and $v:=A_0w$. Then $v\in N(A_0)$ and we obtain
$$|v|_{L_2(\Sigma)}^2=(A_0w|v)_\Sigma=2\mu_s\int_\Sigma \mathcal{D}_\Sigma(w):\mathcal{D}_\Sigma(v)\,d\Sigma=0,$$
by Lemma \ref{lem:app}, \eqref{divergence-parts} and the property $\mathcal{D}_\Sigma(v)=0$, which implies $w\in N(A_0)$.

Finally, we consider the case $\mathcal{E}=\{0\}$. If $\lambda\in \sigma(-A_0)$ with eigenfunction $v\neq 0$, it follows from \eqref{eq:spectralest} that $\operatorname{Re}\lambda\le 0$ and if $\operatorname{Re}\lambda=0$, then $\mathcal{D}_\Sigma(v)=0$ by \eqref{eq:spectralest}, hence $v\in\mathcal{E}=\{0\}$, a contradiction. Therefore, in this case, ${\rm Re}\ \sigma(-A_0)<0$.
\end{proof}
\begin{remark}\label{schief}
The above computations show that the operator $A_0$ from \eqref{eq-full-lin} is not necessarily symmetric in case $u_*\neq 0$. In fact, we have
\begin{equation*}
(A_0 v|\bar v)_\Sigma = 2\mu_s\int_\Sigma |\cD_\Sigma(v)|^2\,d\Sigma 
+i\,{\rm Im}[(v\cdot \nabla_\Sigma u_*|\bar v)_\Sigma + (u_*\cdot \nabla_\Sigma v | \bar v)_\Sigma],
\; v\in X_1.
\end{equation*}
\end{remark}
Since $F_\Sigma$ is bilinear, we obtain from \cite[Theorem 5.3.1]{PrSi16} and the proof of Proposition 5.1 as well as Theorem 5.2 in \cite{MPS19} the following result. 
\begin{theorem}\label{thm:stability}
Suppose $p,q$ and $\mu$ satisfy the assumptions of Theorem~\ref{Theorem-LWP}.

Then each equilibrium $u_*\in\cE$ is stable in $X_{\gamma,\mu}:=B_{qp}^{2\mu-2/p}(\Sigma,\mathsf{T}\Sigma)\cap L_{q,\sigma}(\Sigma,\mathsf{T}\Sigma)$ and there exists $\delta>0$ such that the unique solution $u(t)$ of \eqref{Abstract-Navier-Stokes2} with initial value $u_0\in X_{\gamma,\mu}$ satisfying $|u_0-u_*|_{X_{\gamma,\mu}}<\delta$ exists on $\R_+$ and converges at an exponential rate in $X_{\gamma,1}$ to a (possibly different) equilibrium $u_\infty\in\cE$ as $t\to\infty$.
\end{theorem}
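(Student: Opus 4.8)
The plan is to read \eqref{Abstract-Navier-Stokes2} as an autonomous semilinear parabolic evolution equation in $X_0=L_{q,\sigma}(\Sigma,\mathsf{T}\Sigma)$ with leading operator $A_{S,\Sigma}$ and nonlinearity $F_\Sigma$, and then to invoke the \emph{generalized principle of linearized stability} for normally stable equilibria in the time‑weighted maximal‑regularity setting, i.e.\ \cite[Theorem~5.3.1]{PrSi16} together with the refinements used in the proof of Proposition~5.1 and Theorem~5.2 of \cite{MPS19}. Concretely, one has to verify four hypotheses: (1) $-A_{S,\Sigma}$ is $\mathcal{R}$-sectorial on $X_0$ and has $L_p$-maximal regularity; (2) $F_\Sigma\in C^\infty(X_\beta,X_0)$ with $X_\beta=(X_0,X_1)_{\beta,p}$ for some $\beta\in(\mu-1/p,1)$ with $2\beta-1<\mu-1/p$, and $F_\Sigma$ (hence $F_\Sigma'$) obeys the mapping estimate \eqref{ineq-F-Sigma}; (3) near $u_*$ the equilibrium set $\cE$ is a $C^1$-submanifold of $X_1$ whose tangent space at $u_*$ equals $N(A_0)$; and (4) $u_*$ is normally stable, i.e.\ $\mathrm{Re}\,\sigma(-A_0)\le 0$, $\sigma(A_0)\cap i\R=\{0\}$, $\lambda=0$ is semi-simple, and $\dim N(A_0)=\dim \mathsf{T}_{u_*}\cE<\infty$.

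First I would record that (1)--(4) are already available. Hypothesis (1) is exactly Corollary~\ref{Cor-Stokes} and Subsection~\ref{section:Surface-Stokes}. Hypothesis (2) was established in the run‑up to Theorem~\ref{Theorem-LWP}: since $F_\Sigma$ is bilinear it lies in $C^\infty(X_\beta,X_0)$, and under condition (a) (resp.\ (b)) of Theorem~\ref{Theorem-LWP} one may pick $\beta\in((d/q+1)/4,1)$ (resp.\ $\beta\in(1/2,1)$) with $\mu-1/p<\beta$ and $2\beta-1<\mu-1/p$, so that \eqref{ineq-F-Sigma} holds. Hypothesis (3) is trivial in the present situation, because by Proposition~\ref{pro:equilibria} and \eqref{u-equilibria} the set $\cE$ is the \emph{linear} subspace of $H_q^2(\Sigma,\mathsf{T}\Sigma)$ formed by the Killing fields of $\Sigma$; it is therefore globally a $C^\infty$-manifold with $\mathsf{T}_{u_*}\cE=\cE$ for every $u_*\in\cE$, and it is finite‑dimensional since $X_1$ is compactly embedded in $X_0$. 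Hypothesis (4) is precisely the content of Proposition~\ref{pro:normally-stable}, where the energy identity \eqref{eq:spectralest} gave $N(A_0)=\{v\in X_1:\cD_\Sigma(v)=0\}=\cE$, placed $\sigma(A_0)\setminus\{0\}$ in the open right half‑plane, showed semi‑simplicity of $0$, and produced the isomorphism $N(A_0)\cong \mathsf{T}_{u_*}\cE$; moreover $\mathrm{Re}\,\sigma(-A_0)<0$ when $\cE=\{0\}$.

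With (1)--(4) in place the abstract theorem applies and yields: there exist $\delta>0$ and a neighbourhood of $u_*$ such that for every $u_0\in X_{\gamma,\mu}$ with $|u_0-u_*|_{X_{\gamma,\mu}}<\delta$ the solution of \eqref{Abstract-Navier-Stokes2} exists on $\R_+$, stays close to $u_*$ in $X_{\gamma,\mu}$ (this is the asserted stability in $X_{\gamma,\mu}$), and — exploiting the instantaneous regularization of maximal‑regularity solutions, which by Theorem~\ref{Theorem-LWP} puts $u(t)$ into the unweighted trace space $X_{\gamma,1}=B_{qp}^{2-2/p}(\Sigma,\mathsf{T}\Sigma)\cap L_{q,\sigma}(\Sigma,\mathsf{T}\Sigma)$ for $t>0$ — converges at an exponential rate in $X_{\gamma,1}$ to some $u_\infty\in\cE$ as $t\to\infty$. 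In the degenerate case $\cE=\{0\}$ the unique equilibrium is $u_*=0$, $\mathrm{Re}\,\sigma(-A_0)<0$, and the claim collapses to exponential stability of the zero solution, which follows already from the classical linearized‑stability theorem.

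The only real point requiring care is that $F_\Sigma$ is not defined on $X_{1/2}$ but only on the smaller interpolation space $X_\beta$, so that the classical linearized‑stability result does not apply verbatim; one must use its version allowing (sub)critical nonlinearities on $X_\beta$ together with the time weight $\mu$, which is exactly why \cite{MPS19} is cited alongside \cite[Chapter~5]{PrSi16}. A secondary subtlety is that $A_0$ is not self‑adjoint when $u_*\neq 0$ (Remark~\ref{schief}), so the semi‑simplicity of $0$ and the identification $N(A_0)=\cE$ cannot be obtained from symmetry; but both were already secured via the energy computation \eqref{eq:spectralest}. Beyond these two observations the proof is a careful matching of the hypotheses of the abstract result, and I do not expect any further obstacle.
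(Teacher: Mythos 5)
Your proposal is correct and follows essentially the same route as the paper: the paper's proof consists precisely of invoking \cite[Theorem 5.3.1]{PrSi16} together with the arguments of Proposition 5.1 and Theorem 5.2 in \cite{MPS19}, with the hypotheses supplied by Corollary~\ref{Cor-Stokes}, the bilinearity of $F_\Sigma$ and estimate \eqref{ineq-F-Sigma}, the linearity of $\cE$, and Proposition~\ref{pro:normally-stable}. Your expanded verification of these hypotheses, including the remarks on the non-symmetry of $A_0$ and on the need for the weighted version of the linearized-stability theorem, matches the preparatory work the paper carries out before stating the theorem.
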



\subsection{Existence of equilibria}\label{Killing}
According to  \eqref{u-equilibria}, see also Proposition~\ref{pro:equilibria},
the equilibria $\cE$ of the evolution equation \eqref {Abstract-Navier-Stokes2}
correspond to the Killing fields of $\Sigma$.

It is then an interesting question to know how many Killing fields a given manifold can support,
or to be more precise, what the dimension of the vector space of all Killing fields of $\Sigma$ is. (In fact, it turns out that the Killing fields on a Riemannian manifold form a sub Lie-algebra
of the Lie-algebra of all tangential fields).

It might also be worthwhile to recall that the Killing fields of a Riemannian manifold $(M,g)$ are the infinitesimal generators of  the isometries $I(M,g)$ on $(M,g)$, that is, the generators of flows that are isometries on $(M,g)$.
Moreover, in case $(M,g)$ is complete, the Lie-algebra of Killings fields is isometric to 
the Lie-algebra of $I(M,g)$, see for instance Corollary III.6.3 in \cite{Sak96}.

It then follows from  \cite[Proposition III.6.5]{Sak96} that
${\rm dim}\,\cE \le d(d+1)/2,$
where $d$ is the dimension of $\Sigma$.
For compact manifolds, equality holds if and only if $\Sigma$ is isometric to $\mathbb S^d$,
the standard $d$-dimensional Euclidean sphere in $\R^{d+1}$.

On the other hand, if $(M,g)$ is compact and the Ricci tensor is negative definite everywhere,
then any Killing field on $M$ is equal to  zero and $I(M,g)$ is a finite group, see 
\cite[Proposition III.6.6]{Sak96}.
In particular, if $(M,g)$ is a two-dimensional Riemannian manifold with negative Gaussian curvature
then any Killing field is $0$.

\begin{example}\mbox{}
(a) 
Let $\Sigma =\mathbb S^2$. Then ${\rm dim}\,\cE=3$
and each equilibrium $u_* \in \cE$ corresponds to a rotation about an axis spanned by
a vector $\upomega=(\upomega_1,\upomega_2,\upomega_3)\in\R^3$.
Therefore, $u_*\in \cE$ is given by
 $$u_*(x)=\upomega\times x,\quad x\in {\mathbb S}^2,$$
 for some $\upomega\in\R^3$.
 According to Theorem~\ref{thm:stability}, each equilibrium $u_*=\upomega\times x $ is stable and each solution
 $u$ of \eqref {Abstract-Navier-Stokes2} that starts out close to $u_*$ converges at an exponential
 rate towards a (possibly different) equilibrium $u_\infty =\upomega_\infty \times x$ for some $\upomega_\infty\in \R^3$.
 \medskip\\
 (b)
Suppose $\Sigma={\mathbb T}^2$, say with parameterization
 \begin{equation}
 \begin{aligned}
 x_1 &=(R+r\cos\phi)\cos\theta \\
 x_2 &=(R+r\cos\phi)\sin \theta \\
 x_3 &=r\sin \phi,
 \end{aligned}
 \end{equation}
 where $\phi,\theta \in [0,2\pi)$ and $0<r<R$. Then one readily verifies that the velocity field
 $u_*=\omega e_3\times x$, with $\omega\in\R$, is an equilibrium. 
 Hence, the fluid on the torus rotates about the $x_3$-axis with angular velocity $\omega$. According to Theorem~\ref{thm:stability}, all of these equilibria are stable.
 
 \smallskip
 
With the above parameterization, one shows that 
$$K=\frac{\cos \phi}{r(R+r\cos \phi)},$$
where $K$ is the Gauss curvature.
By Gauss' Theorema Egregium, $K$ is invariant under (local) isometries, and this implies that
rotations around the $x_3$-axis are the only isometries on $\mathbb{T}^2$. 
 

 \end{example}

\appendix
\section{}
\noindent

In this appendix we collect some results from differential geometry that are employed throughout the manuscript.
We also refer to \cite[Chapter 2]{PrSi16} for complementary information.

\noindent
We will assume throughout that $\Sigma$ is a smooth, compact, closed (that is, without boundary) hypersurface embedded in $\R^{d+1}$.
We mention on the go that these assumptions imply that $\Sigma$ is orientable, see for instance~\cite{Sam68}.

Let $\nu_\Sigma$ be the unit normal field of $\Sigma$ (which is compatible with the chosen orientation).
Then the orthogonal projection $\cP_\Sigma$ onto the tangent bundle of $\Sigma$ is defined by
$\cP_\Sigma = I-\nu_\Sigma\otimes\nu_\Sigma$.

We use the notation
$\{\tau_1(p), \cdots,\tau_d(p)\}$ to denote a local basis of the tangent space ${\sf T}_p\Sigma$ of $\Sigma$ at $p$, and
$\{\tau^1(p), \cdots,\tau^d(p)\}$ to denote the corresponding dual basis of the cotangent space ${\sf T}^*_p\Sigma$ at $p$.
Hence we have $(\tau^i(p) | \tau_j(p))=\delta^i_j$, the Kronecker delta function.
Note that
\begin{equation*}
\{\tau_1,\cdots,\tau_d\}=\left\{\frac{\partial}{\partial x^1},\cdots,\frac{\partial}{\partial x^d}\right\},
\quad
\{\tau^1,\cdots,\tau^d\}=\left\{dx^1,\cdots, dx^d\right\}.
\end{equation*}
In this manuscript we will occasionally not distinguish between vector fields and covector fields, that is, we identify
\begin{equation}\label{identify}
u=u^i \tau_i = u_i\tau^i,
\end{equation}
where, as usual, the Einstein summation convention is employed throughout.
The metric tensor is given by $g_{ij}=(\tau_i |\tau_j),$ where $(\cdot | \cdot)$ is the Euclidean inner product of $\R^{d+1}$,
and the dual metric $g^*$ on the cotangent bundle ${\sf T}^*\Sigma$ is given by $g^{ij}=(\tau^i|\tau^j)$. It holds that
\begin{equation}\label{metric}
g^{ik}g_{kj}=\delta^i_j,\quad \tau^i=g^{ij}\tau_j,\quad \tau_j=g_{jk}\tau^k.
\end{equation}
Hence, $g$ is induced by the inner product $(\cdot | \cdot)$, that is, we have
\begin{equation}\label{dual-metric}
g(u,v)=g_{ij}u^iv^j= (u^i\tau_i | v^j\tau_j)=(u|v),\quad u=u^i\tau_i,\;\; v=v^j\tau_j,
\end{equation}
whereas
\begin{equation}
g^*(u,v)=g^{ij}u_iv_j=(u_i\tau^i | v_j\tau^j)=(u|v),\quad u=u_i\tau^i,\;\; v=v_j\tau^j.
\end{equation}
It holds that
\begin{equation}\label{Christoffel}
\partial_i \tau_j = \Lambda^k_{ij}\tau_k +l_{ij}\nu_\Sigma, \quad
\partial_i\tau^j = -\Lambda^j_{ik}\tau^k + l^j_i \nu_\Sigma,
\end{equation}
where $\Lambda^k_{ij}$ are the Christoffel symbols,
$l_{ij}$ are the components of the second fundamental form, and
$l^i_j$ are the components of the Weingarten tensor $L_\Sigma$; that is, we have
\begin{equation}\label{second fundamental}
l_{ij}=(\partial_j\tau_i| \nu_\Sigma)=-(\tau_i | \partial_j\nu_\Sigma), \quad l^j_i=g^{jk}l_{ki},
\end{equation}
and
\begin{equation}\label{Weingarten}
L_\Sigma \tau_j=-\partial_j\nu_\Sigma, \quad
L_\Sigma 
=l^j_i\tau^i\otimes\tau_j.
\end{equation}

If $\varphi\in C^1(\Sigma, \R)$, the surface gradient of $\varphi$ is defined by
$\nabla_\Sigma\varphi=\partial_i\varphi \tau^i$. 
If $u$ is a $C^1$-vector field on $\Sigma$ (not necessarily tangential) we define the surface gradient of $u$ by
\begin{equation}\label{surface-gradient}
\nabla_\Sigma u= \tau^i\otimes \partial_i u.
\end{equation}
It follows from~\eqref{Christoffel} that
\begin{equation}\label{derivative-local}
\partial_i u = \partial_i (u^j\tau_j)=(\partial_i u^j  + \Lambda^j_{ik}u^k)\tau_j+ l_{ij} u^j \nu_\Sigma
\end{equation}
for a tangential vector field $u$.
The \emph{covariant derivative} $\nabla_i u$ of a tangential vector field is defined by
$\nabla_i u =P_\Sigma \partial_i u.$
Hence we have
\begin{equation}\label{covariant-local}
\nabla_i u=(\partial_i u^j  + \Lambda^j_{ik}u^k)\tau_j\quad\text{for}\quad u= u^j\tau_j.
\end{equation}

The surface divergence ${\rm div}_\Sigma u$ for a (not necessarily tangential) vector field $u$ is defined by
\begin{equation}\label{divergence-definition}
{\rm div}_\Sigma u = (\tau^i | \partial_i u).
\end{equation}
Then the $d$-fold mean curvature $\kappa_\Sigma$  of $\Sigma$ is given by
\begin{equation}\label{mean-curvature}
\kappa_\Sigma =-{\rm div}_\Sigma \nu_\Sigma
= -(\tau^i | \partial_i \nu_\Sigma)=(\tau^i | L_\Sigma\tau_i)
={\rm tr}L_\Sigma=l^i_i.
 \end{equation}
Hence, $\kappa_\Sigma$ is the trace of $L_\Sigma$ (which equals the sum of the principal curvatures).
For a vector field $u=v^j\tau_j+w\nu_\Sigma$, it follows
from~\eqref{derivative-local}, \eqref{mean-curvature} and the fact that $\nu_\Sigma$ and $\tau^i$ are orthogonal
\begin{equation}\label{divergence-local}
{\rm div}_\Sigma u =(\tau^i |\partial_i u)=(\partial_i v^i  +\Lambda^i_{ik}v^k) -w\kappa_\Sigma.
\end{equation}
For a tangent vector field $u$ and a scalar function $\varphi$, the surface divergence theorem states that
\begin{equation}\label{divergence-theorem}
\int_\Sigma (\nabla_\Sigma \varphi | u)\,d\Sigma = -\int_\Sigma \varphi\, {\rm div}_\Sigma\, u\,d\Sigma.
\end{equation}
For a tensor $K=k^j_i \tau^i\otimes\tau_j$, the surface divergence is defined by
\begin{equation}\label{divergence-tensor}
{\rm div}_\Sigma K = (\partial_i K)^{\sf T}\tau^i.
\end{equation}
Hence,
\begin{equation}\label{divergence-P}
{\rm div}_\Sigma \cP_\Sigma = \partial_i (\tau_j\otimes\tau^j)\tau^i = \kappa_\Sigma \nu_\Sigma.
\end{equation}
\goodbreak
\begin{lemma}\label{lem:app}
Suppose $\varphi$ is a $C^1$-scalar function and $u,v,w$ are $C^1$-tangential vector fields on $\Sigma$. Then
\begin{itemize}
\item[(a)] ${\rm div}_\Sigma (\varphi \cP_\Sigma) = \nabla_\Sigma \varphi + \varphi \kappa_\Sigma \nu_\Sigma$.
\vspace{2mm}
\item[(b)] $ (u\cdot\nabla_\Sigma v) = (\nabla_\Sigma v)^{\sf T} u$.
\vspace{2mm}
\item[(c)] $\nabla_\Sigma (u|v)=  (\nabla_\Sigma u)v + (\nabla_\Sigma v)u$.
\vspace{2mm}
\item[(d)] $\big(u \big |\nabla_\Sigma (v|w)\big)
=\big(u\cdot\nabla_\Sigma v \big| w \big) + \big(u\cdot\nabla_\Sigma w \big| v \big) .$
\vspace{2mm}
\end{itemize}
\end{lemma}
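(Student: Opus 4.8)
The plan is to verify parts (a), (b), (c) by direct computation in a local frame $\{\tau_i\}$ using only the definitions and identities collected above, and then to obtain (d) as a purely formal consequence of (b) and (c). None of the four identities is deep; the only point requiring care is keeping the tensor/transpose conventions consistent (the action $(a\otimes b)c=(b|c)a$, the identification \eqref{identify} of vectors with covectors, and the fact that $(\tau^i|u)=u^i$ for a tangential field $u$).

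For (a), I would expand ${\rm div}_\Sigma(\varphi\cP_\Sigma)=(\partial_i(\varphi\cP_\Sigma))^{\sf T}\tau^i$ via \eqref{divergence-tensor} and the Leibniz rule, writing $\partial_i(\varphi\cP_\Sigma)=(\partial_i\varphi)\cP_\Sigma+\varphi\,\partial_i\cP_\Sigma$. Since $\cP_\Sigma$ is symmetric and $\cP_\Sigma\tau^i=\tau^i$ (because $\tau^i\in{\sf T}\Sigma$), the first term contributes $(\partial_i\varphi)\tau^i=\nabla_\Sigma\varphi$, while the second contributes $\varphi\,{\rm div}_\Sigma\cP_\Sigma=\varphi\kappa_\Sigma\nu_\Sigma$ by \eqref{divergence-P}; this is exactly (a).

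For (b), recall $\nabla_\Sigma v=\tau^i\otimes\partial_i v$ from \eqref{surface-gradient}, so $(\nabla_\Sigma v)^{\sf T}=(\partial_i v)\otimes\tau^i$ and hence $(\nabla_\Sigma v)^{\sf T}u=(\tau^i|u)\,\partial_i v=u^i\partial_i v=u\cdot\nabla_\Sigma v$. For (c), observe that $(u|v)$ is a scalar function, so $\nabla_\Sigma(u|v)=\partial_i(u|v)\,\tau^i=\big((\partial_i u|v)+(u|\partial_i v)\big)\tau^i$ by the Euclidean product rule; on the other hand $(\nabla_\Sigma u)v=(\tau^i\otimes\partial_i u)v=(\partial_i u|v)\tau^i$ and likewise $(\nabla_\Sigma v)u=(\partial_i v|u)\tau^i$, and adding these gives the claim. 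Finally, (d) follows by combining the two: by (c), $\nabla_\Sigma(v|w)=(\nabla_\Sigma v)w+(\nabla_\Sigma w)v$, so pairing with $u$ and using the defining property of the transpose together with (b),
\[
\big(u\,\big|\,\nabla_\Sigma(v|w)\big)=\big(u\,\big|\,(\nabla_\Sigma v)w\big)+\big(u\,\big|\,(\nabla_\Sigma w)v\big)=\big((\nabla_\Sigma v)^{\sf T}u\,\big|\,w\big)+\big((\nabla_\Sigma w)^{\sf T}u\,\big|\,v\big)=(u\cdot\nabla_\Sigma v|w)+(u\cdot\nabla_\Sigma w|v).
\]
The main "obstacle" is therefore only bookkeeping: one must make sure the normal component of $\partial_i\cP_\Sigma$ in (a) is correctly absorbed into $\kappa_\Sigma\nu_\Sigma$ via \eqref{divergence-P}, and that the convention for the action of $\tau^i\otimes(\cdot)$ on a vector matches the convention for the transpose used in (b) and (d).
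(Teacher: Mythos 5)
Your proposal is correct and follows essentially the same route as the paper: part (a) via the Leibniz rule together with \eqref{divergence-tensor} and \eqref{divergence-P}, parts (b) and (c) by direct computation in local coordinates with the convention $(a\otimes b)c=(b|c)a$, and part (d) as a formal consequence of (b) and (c). Your explicit handling of the transpose in (a) and (d) is, if anything, slightly more careful than the paper's write-up.
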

\begin{proof}
(a)
It follows from \eqref{divergence-tensor} that
\begin{equation*}
{\rm div}_\Sigma (\varphi\cP_\Sigma) =\partial_i (\varphi\cP_\Sigma)\tau^i
=(\partial_i\varphi) \cP_\Sigma \tau^i +\varphi \partial_i (\cP_\Sigma)\tau^i
= \partial_i\varphi \tau^i +\varphi {\rm div}_\Sigma \cP_\Sigma.
\end{equation*}
The assertion is now a consequence of \eqref{divergence-P}.
\smallskip\\
\noindent
{ (b)} Using local coordinates, we obtain
\begin{equation*}
u\cdot\nabla_\Sigma v
= u^i\partial_i v= (\partial_i v\otimes \tau^i)u=(\nabla_\Sigma v)^{\sf T}u.
\end{equation*}
 (c) In local coordinates, $\partial_i (u|v) =(\partial_i u | v) +(u | \partial_i v)$.
It is now easy to conclude that
\begin{equation*}
 \nabla_\Sigma  (u|v) = \partial_i (u|v) \tau^i
= (\tau^i \otimes\partial_i u) v  + (\tau^i \otimes \partial_i v)u = (\nabla_\Sigma u)v + (\nabla_\Sigma v)u.
\end{equation*}
(d)
This follows from the assertions in (b) and (c).
\end{proof}

Let
$$
\cD_\Sigma (u):=\frac{1}{2}\cP_\Sigma \big( \nabla_\Sigma u +[\nabla_\Sigma u]^{\sf T}\big)\cP_\Sigma.
$$
Suppose $u\in C^2(\Sigma, {\sf T}\Sigma)$, $v\in C^1(\Sigma, {\sf T}\Sigma)$.
Then one shows that
\begin{equation}\label{divergence-parts}
\begin{aligned}
({\rm div}_\Sigma 
\cD_\Sigma(u)|v) &= {\rm div}_\Sigma (\cD_\Sigma(u)v )- \cD_\Sigma(u): \nabla_\Sigma v \\
({\rm div}_\Sigma 
\cD_\Sigma(u)|u) & = {\rm div}_\Sigma (\cD_\Sigma(u)u) - |\cD_\Sigma(u)|^2 \\
{\rm tr}\, \cD_\Sigma(u)&={\rm div}_\Sigma u,
\end{aligned}
\end{equation}
where $\cD_\Sigma(u):\nabla_\Sigma v=(\cD_\Sigma(u) \tau^j | (\nabla_\Sigma v)^{\sf T} \tau_j)$, 
$|\cD_\Sigma(u)|^2=\cD_\Sigma(u):\cD_\Sigma(u)$, 
and 
${\rm tr}\,\cD_\Sigma (u)= (\cD_\Sigma (u) \tau^j | \tau_j)$.

\medskip
\begin{proposition}\label{pro:D-Sigma}
Suppose $u\in C^2(\Sigma,{\sf T}\Sigma)$.
\begin{itemize}
\item[(a)]
Then we have the following representation:
\begin{equation*}
2\cP_\Sigma \,{\rm div}_\Sigma\, \cD_\Sigma(u)
=\Delta_\Sigma u+ \nabla_\Sigma\, {\rm div}_\Sigma u + (\kappa_\Sigma L_\Sigma - L^2_\Sigma )u,
\end{equation*}
where $\Delta_\Sigma$ is the Bochner-Laplacian (also called the conformal Laplacian),
defined in local coordinates by
\begin{equation*}
\Delta_\Sigma =g^{ij}(\nabla_i\nabla_j - \Lambda^k_{ij}\nabla_k).
\end{equation*}
\item[(b)]
It holds that
\begin{equation*}
\label{Ricci-Gauss-2}
\begin{aligned}
& (\kappa_\Sigma L_\Sigma - L^2_\Sigma) u = {\rm Ric}_\Sigma u \\
& (\kappa_\Sigma L_\Sigma - L^2_\Sigma) u = K_\Sigma u\quad \text{in case $d=2$,}
\end{aligned}
\end{equation*}
where  ${\rm Ric}_\Sigma$ is the Ricci tensor and
 $K_\Sigma$  the Gaussian curvature of $\Sigma$,
 respectively. \\
\end{itemize}
\end{proposition}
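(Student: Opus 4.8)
The plan is to work entirely in local coordinates, following the conventions fixed in the Appendix, and to compute $2\cP_\Sigma\,{\rm div}_\Sigma\,\cD_\Sigma(u)$ componentwise against the basis $\tau^\ell$ (or $\tau_\ell$). First I would unwind the definition: since $\cD_\Sigma(u)=\tfrac12\cP_\Sigma(\nabla_\Sigma u+[\nabla_\Sigma u]^{\sf T})\cP_\Sigma$ and $u$ is tangential, the sandwiching projections combined with \eqref{derivative-local} show that $\cD_\Sigma(u)$ is the symmetrized covariant gradient, i.e.\ in components $(\nabla_i u_j+\nabla_j u_i)/2$ where $\nabla_i$ is the Levi-Civita connection from \eqref{covariant-local}; here the normal components $l_{ij}u^j$ get annihilated by $\cP_\Sigma$. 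Then I would apply the tensor divergence \eqref{divergence-tensor}, being careful that ${\rm div}_\Sigma$ of a tensor involves $\partial_i$ of the full (not projected) object followed by the transpose and contraction with $\tau^i$; the extra terms coming from $\partial_i\tau_j$ and $\partial_i\tau^j$ via the Christoffel relation \eqref{Christoffel} are exactly what produce both the connection terms assembling into $\Delta_\Sigma u$ and the curvature terms involving $l^i_j$.

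The key algebraic identity is a Ricci/Bochner–Weitzenb\"ock-type commutation: the difference between $g^{ij}\nabla_i\nabla_j u_\ell$ and $g^{ij}\nabla_\ell\nabla_i u_j$ (which arises when one term gives $\nabla_\Sigma{\rm div}_\Sigma u$) is the Riemann curvature contracted once, i.e.\ a Ricci term. So the second step is to commute covariant derivatives, collect $\Delta_\Sigma u$ (the Bochner Laplacian, with the coordinate formula stated in~(a)), collect $\nabla_\Sigma{\rm div}_\Sigma u$, and identify the remaining terms. For the curvature bookkeeping I would use the Gauss equation for a hypersurface in $\R^{d+1}$: the intrinsic Riemann tensor is $R_{ijkl}=l_{ik}l_{jl}-l_{il}l_{jk}$, so that contracting gives ${\rm Ric}_{jl}=\kappa_\Sigma l_{jl}-l_{jk}l^k_l$, which in operator form is precisely $\kappa_\Sigma L_\Sigma-L_\Sigma^2$. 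This simultaneously proves~(b): the first line is the Gauss equation contracted once, and the second line uses that in dimension $d=2$ the Ricci tensor equals $K_\Sigma g$, hence acts on tangent vectors as multiplication by the Gaussian curvature $K_\Sigma=\det L_\Sigma=$ product of principal curvatures.

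I expect the main obstacle to be purely bookkeeping: keeping track of the normal components that appear at intermediate stages (e.g.\ $\partial_i\tau_j$ has a $l_{ij}\nu_\Sigma$ piece, and $\partial_i(\cD_\Sigma(u)\tau^j)$ generates both tangential and normal contributions) and verifying that after applying the outer $\cP_\Sigma$ all normal parts either cancel or are projected away, leaving a clean tangential identity. A secondary subtlety is sign and index conventions — in particular the paper's convention that a sphere has negative mean curvature, and the placement of indices in $l^i_j$ versus $l_{ij}$ — so I would fix these once at the start using \eqref{second fundamental}, \eqref{Weingarten}, \eqref{mean-curvature} and check the sphere $\mathbb S^d$ as a sanity case, where $L_\Sigma=-\mathrm{id}$, $\kappa_\Sigma=-d$, so $\kappa_\Sigma L_\Sigma-L_\Sigma^2=(d-1)\,\mathrm{id}$, matching ${\rm Ric}_{\mathbb S^d}=(d-1)g$. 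Once the local computation is organized this way, the proof reduces to assembling the three groups of terms and quoting the Gauss equation.
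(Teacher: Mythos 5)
Your proposal is correct, but it takes a genuinely different route from the paper for part (a). You work intrinsically: after observing that $\cP_\Sigma\,{\rm div}_\Sigma$ of the (purely tangential) tensor $\cD_\Sigma(u)$ is the covariant divergence of the symmetrized covariant gradient, you invoke the commutation of covariant derivatives to produce $\Delta_\Sigma u+\nabla_\Sigma\,{\rm div}_\Sigma u+{\rm Ric}_\Sigma u$, and only then convert ${\rm Ric}_\Sigma$ into $\kappa_\Sigma L_\Sigma-L_\Sigma^2$ via the contracted Gauss equation; in effect you prove (a) by way of (b). The paper instead computes entirely extrinsically: it splits $2\cD_\Sigma(u)=\tau^i\otimes\nabla_i u+\nabla_i u\otimes\tau^i$ and applies \eqref{divergence-tensor} with ambient partial derivatives, so that $\partial_i\partial_j=\partial_j\partial_i$ in $\R^{d+1}$ replaces any curvature commutator, and the terms $\kappa_\Sigma L_\Sigma u-L_\Sigma^2 u$ drop out directly from the normal components $l_{ij}\nu_\Sigma$ and $l^j_i\nu_\Sigma$ in \eqref{Christoffel}; the Gauss equation enters only afterwards, in (b). Your route is shorter if one is willing to quote the Weitzenb\"ock-type identity and it makes the intrinsic character of the operator manifest from the outset; the paper's route uses only flat ambient calculus and never needs the Riemann tensor for (a). One small caveat: the commutation you actually need is between $g^{ij}\nabla_i\nabla_\ell u_j$ (the divergence of $[\nabla_\Sigma u]^{\sf T}$) and $g^{ij}\nabla_\ell\nabla_i u_j=\nabla_\ell\,{\rm div}_\Sigma u$ --- not, as written, between $g^{ij}\nabla_i\nabla_j u_\ell$ and $g^{ij}\nabla_\ell\nabla_i u_j$, whose difference is not merely a curvature term; your surrounding text (and the correct final identity) indicates this is an index slip rather than a conceptual error. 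Part (b) is handled exactly as in the paper.
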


\begin{proof}
(a)
We note that in local coordinates, $\cD_\Sigma(u)$ is given by
\begin{equation}\label{D-Sigma=}
\begin{aligned}
2\cD_\Sigma(u)
&= \tau^i\otimes P_\Sigma \partial_i u + P_\Sigma\partial_i u\otimes \tau^i
= \tau^i\otimes \nabla_i u + \nabla_i u\otimes \tau^i.
\end{aligned}
\end{equation}
From \eqref{divergence-tensor} and  the relation $\cP_\Sigma = I-\nu_\Sigma\otimes\nu_\Sigma$ follows
\begin{equation*}
\begin{aligned}
      \cP_\Sigma {\rm div}_\Sigma  ( \nabla_i u\otimes \tau^i )
&= \cP_\Sigma \partial_j ( \tau^i \otimes \cP_\Sigma\partial_iu ) \tau^j \\
&= (\cP_\Sigma \partial_j\tau^i \otimes \cP_\Sigma\partial_i u)\tau^j +
        (\tau^i\otimes \partial_j \cP_\Sigma\partial_i u)\tau^j\\
&= (\partial_i u | \tau^j)\cP_\Sigma \partial_j\tau^i
       + \tau^i\big(\partial_j(\partial_i u- [\nu_\Sigma\otimes \nu_\Sigma]\partial_i u) | \tau^j\big) \\
& =  (\partial_i u | \tau^j)\cP_\Sigma \partial_j\tau^i + \tau^i (\partial_j\partial_i u|\tau^j)
-   \tau^i(\partial_j\nu_\Sigma | \tau^j)(\nu_\Sigma |\partial_i u),
\end{aligned}
\end{equation*}
where, in the last line, we employed the relation $(\nu_\Sigma | \tau^j)=0$. 
Next, we observe that
\begin{equation*}
\begin{aligned}
(\partial_j\partial_i u|\tau^j)\tau^i
& = \partial_i (\partial_j u| \tau^j) \tau^i - (\partial_j u | \partial_i \tau^j)\tau^i \\
& = \nabla_\Sigma {\rm div}_\Sigma u - (\partial_j u| -\Lambda^j_{ik}\tau^k +l^j_i\nu_\Sigma)\tau^i \\
& =  \nabla_\Sigma {\rm div}_\Sigma u -(\partial_j u| \tau^k) \cP_\Sigma \partial_k\tau^j  - l^j_i (\partial_j u | \nu_\Sigma)\tau^i \\
& =  \nabla_\Sigma {\rm div}_\Sigma u -(\partial_j u| \tau^k) \cP_\Sigma \partial_k\tau^j  - L^2_\Sigma u,
\end{aligned}
\end{equation*}
where we used \eqref{derivative-local} and  the relations
$L_\Sigma \tau^m = l^m_r\tau^r$ as well as $L_\Sigma \tau_m=l_{m r}\tau^r$
to deduce
\begin{equation}\label{R1}
l^j_i(\partial_j u | \nu_\Sigma)\tau^i =l^j_i l_{jk} u^k \tau^i = L_\Sigma  (u^k l_{jk}\tau^j )
= L^2_\Sigma u_k\tau^k =L^2_\Sigma u.
\end{equation}
From  \eqref{derivative-local} and \eqref{mean-curvature} follows
\begin{equation}\label{R2}
- (\partial_j\nu_\Sigma | \tau^j)(\nu_\Sigma |\partial_i u)\tau^i = l^j_j l_{ik} u^k \tau^i
= \kappa_\Sigma L_\Sigma u^k\tau_k = \kappa_\Sigma L_\Sigma u.
\end{equation}
Summarizing we have shown that
\begin{equation*}
  \cP_\Sigma {\rm div}_\Sigma  ( \nabla_i u\otimes \tau^i )
 = \nabla_\Sigma {\rm div}_\Sigma u + \kappa_\Sigma L_\Sigma u  -L^2_\Sigma u .
\end{equation*}
Moreover, we have
\begin{equation*}
\begin{aligned}
      \cP_\Sigma {\rm div}_\Sigma  (\tau^i \otimes \nabla_iu)
&= \cP_\Sigma \partial_j (\nabla_iu  \otimes \tau^i)\tau^j \\
&= g^{ij}\cP_\Sigma \partial_j \nabla_iu  + (\partial_j \tau^i|\tau^j) \nabla_i u  \\
& =  g^{ij}(\nabla_i \nabla_j u -\Lambda^k_{ij}\nabla_k u) =\Delta_\Sigma u.
\end{aligned}
\end{equation*}

\medskip\noindent
(b)
The computations in \eqref{R1}-\eqref{R2}  show that  in local coordinates
\begin{equation*}
(\kappa_\Sigma L_\Sigma -L^2_\Sigma)u=g^{jm}(l_{jm}l_{ik}- l_{im} l_{jk})u^k \tau^i.
\end{equation*}
By the Gauss equation, see for instance \cite[Proposition II.3.8]{Sak96}, this yields
\begin{equation*}
\begin{aligned}
(\kappa_\Sigma L_\Sigma -L^2_\Sigma)u
&=g^{jm}R_{jikm} u^k\tau^i
=R_{ik} u^k\tau^i
= {\rm Ric}_\Sigma u,
\end{aligned}
\end{equation*}
where $R_{jikm}$ are the components of the curvature tensor and
$R_{ik}$ the components of the Ricci $(0,2)$-tensor.
In case that $\Sigma$ is a surface embedded in $\R^3$, one obtains
\begin{equation*}
(\kappa_\Sigma L_\Sigma -L^2_\Sigma)u= K_\Sigma u,
\end{equation*}
where $K_\Sigma$ is the Gauss curvature of $\Sigma$.
This can, for instance, be seen as follows:
\begin{equation*}
\begin{aligned}
(\kappa_\Sigma L_\Sigma -L^2_\Sigma)u
&=g^{jm}(l_{jm}l_{ik}- l_{im} l_{jk})u^k \tau^i \\
&= (l^j_j l^k_i-l^j_i l^k_j)u_k\tau^i = \det (L_\Sigma)\, \delta^k_i u_k\tau^i = K_\Sigma u.
\end{aligned}
\end{equation*}
The proof of Proposition~\ref{pro:D-Sigma} is now complete.
\end{proof}
{
\begin{remarks}\label{rem:app}
{ (a)}
We note that in local coordinates,
\begin{equation*}
\begin{aligned}
2\cD_\Sigma(u)
=(\partial_i u_j -\Lambda^k_{ij}u_k)\tau^i\otimes \tau^j
+ (\partial_i u_j -\Lambda^k_{ij}u_k)\tau^j\otimes \tau^i
\end{aligned}
\end{equation*}
for $u=u_j\tau^j\in C^1(\Sigma, {\sf T}^*\Sigma)$ and
\begin{equation*}
\begin{aligned}
2\cD_\Sigma(u)
=(\partial_i u^j  +\Lambda^j_{ik}u^k)\tau^i\otimes \tau_j
+ (\partial_i u^j +\Lambda^j_{ik}u^k)\tau_j\otimes \tau^i
\end{aligned}
\end{equation*}
for $u=u^j\tau_j\in C^1(\Sigma, {\sf T}\Sigma)$.
\medskip\\
\noindent
{ (b)}
Suppose $u,v$ are tangential fields on $\Sigma$.  Then
\begin{equation}\label{connection}
\nabla_v u := (\nabla_i u\otimes \tau^i)v
\end{equation}
coincides with the Levi-Civita connection $\nabla$ of $\Sigma$. \\
We note that $\nabla_i =\nabla_{\tau_i}=\nabla_{\frac{\partial \;}{\partial x^i}}$
and $\nabla u= \nabla_i u\otimes \tau^i$ in local coordinates.
\medskip\\
\noindent
(c)
A straightforward computation shows that in local coordinates
\begin{equation*}
g^{ij}(\nabla_i\nabla_j u - \Lambda^k_{ij}\nabla_ku)
=g^{ij}(\nabla^2 u)(\tau_i,\tau_j),\quad u\in C^2(\Sigma, {\sf T}\Sigma).
\end{equation*}
Hence,  $\Delta_\Sigma ={\rm tr}_g\,(\nabla^2 u)$.
\medskip\\
\noindent
{ (d)}
Let $\nabla $ be the Levi-Civita connection of $\Sigma$.
Then it follows from (b) and~\eqref{D-Sigma=}
\begin{equation*}
\cD_\Sigma (u)= \frac{1}{2}\left( \nabla u + [\nabla u]^{\sf T}\right).
\end{equation*}
\noindent
{(e)} Suppose $u,v,w$ are $C^1$-tangential fields.
Employing  \eqref{D-Sigma=} and \eqref{connection}, one readily verifies that
\begin{equation*}
(\cD_\Sigma(u)v | w) + (\cD_\Sigma (u)w|v)= (\nabla_v u | w) + (\nabla_w u | v).
\end{equation*}
We remind that a tangential field $u$ on $\Sigma$ is called a {\em Killing field} if
\begin{equation*}
(\nabla_v u |w) + (\nabla_w u|v)=0\quad \text{for all tangential fields $v,w$ on $\Sigma$},
\end{equation*}
see for instance \cite[Lemma III.6.1]{Sak96}.
This implies for a $C^1$-tangent field $u$
\begin{equation*}
\cD_\Sigma(u)=0\;\; \Longleftrightarrow \;\; \text{$u$ is a Killing  field}.
\end{equation*}
\end{remarks}
}

\bigskip
{\bf Acknowledgments:}
This is the last joint work with our close friend Jan Pr\"uss, who passed away before this manuscript was completed. 
We are grateful for his input and ideas which are now incorporated in this manuscript.  He is deeply missed.

\smallskip
Lastly, we would also like to express our thanks to Marcelo Disconzi for helpful discussions.


\end{document}